\documentclass[11pt,
]{amsart}
\usepackage{
amssymb, graphicx
}
\usepackage{mathrsfs} 
\usepackage{amsthm}
\usepackage{graphicx,color}
\usepackage{hyperref}
\usepackage{subcaption}

\newtheorem{theorem}{Theorem}[section]
\newtheorem{lemma}[theorem]{Lemma}

\newtheorem{definition}[theorem]{Definition}
\newtheorem{remark}[theorem]{Remark}
\newtheorem{problem}[theorem]{Problem}

\title[leakage, collision, and self-adjoint cancellation]{Leakage detection, collision relation, and self-adjoint cancellation in multi-velocity systems}
\author{Yuchao Yi}
\date{}

\begin{document}
\begin{abstract}
Let $W^T$ map Dirichlet data $f$ to time $T$ state $u^f(T, \cdot)$, we study the $K$-normal operator $C^T_K:= (W^T)^*KW^T$ where $(W^T)^*$ is the standard $L^2(dx)$ adjoint. We prove that it is a locally finite sum of Fourier Integral Operators (FIOs) away from the glancing directions with canonical relation involving collision data at time $T$ between pairs of velocities. However, if the operator is $M$-self-adjoint, then the canonical relation for $C^T_M$ loses the collision data. Thus, when system satisfies certain coupling requirement, we demonstrate off-polarization leakage detection and introduce a new collision rigidity problem. Combine these two parts, we prove an inverse problem of velocity recovery from $C^T_I$ for multi-velocity wave models and variable coefficient isotropic elasticity system, where $I$ is the identity matrix.
\end{abstract}

\maketitle

\tableofcontents

\section{Introduction}

Many hyperbolic systems have several characteristic families. For instance, in the isotropic Lam\'e system, wave propagation splits into pressure and shear waves. More generally, the principal symbol of a system may have several real eigenvalues, each giving a different characteristic sheet and a different polarization space. The purpose of this paper is to study what information about the interaction between these characteristic sheets can be read and used for inverse problems.

Let $E=\Omega\times \mathbb C^N$ be a trivial vector bundle over a compact domain $\Omega$ with smooth boundary. We consider second order hyperbolic systems of the form
\[
\mathcal L u := \mathsf m(x)\partial_t^2 u + P(x,D_x)u = 0,
\]
where $\mathsf m(x)$ is pointwise invertible and $P$ is a second order pseudodifferential operator. After multiplying by $\mathsf m^{-1}$, one may work microlocally with a normalized system
\[
Lu := \partial_t^2 u + A(x,D_x)u=0.
\]
Let $a_2(x,\xi)$ be the principal symbol of $A$. We assume that $a_2$ is smoothly diagonalizable and the eigenvalues are positive, real, and have constant multiplicities. Denote them by
\[\lambda_1^2 < \cdots < \lambda_k^2.\]
Then the characteristic variety decomposes into disjoint sheets
\[
\Sigma_\alpha=\{(t,x,\tau,\xi):\tau^2=\lambda_\alpha^2(x,\xi)\},
\qquad \alpha=1,\ldots,k.
\]
Each sheet carries a polarization space, and singularities associated with different sheets propagate with different speeds. We assume that after first order reduction, the \emph{uniform Lopatinski condition} (see Definition 1.2 of \cite{Met17}) is satisfied on the boundary so that well-posedness holds with any $L^2$ Dirichlet boundary data, see Theorem 1.8 of \cite{Met17} (the theorem holds under constant multiplicity assumption, see discussion before Theorem 1.8).

For $T>0$, let $W^T$ be the solution operator mapping Dirichlet boundary data $f$ on $(0,T)\times\partial\Omega$ to the time $T$ state $u^f(T,\cdot)$. Given a smooth positive bundle endomorphism $K$, we define the $K$-normal operator by
\[
C_K^T=(W^T)^*KW^T,
\]
where $(W^T)^*$ is the adjoint with respect to the standard $L^2(dx)$ pairing in the interior and the standard $L^2(dt,dS)$ pairing on the boundary. Equivalently,
\[
\langle C_K^T f,g\rangle
=
\int_\Omega (u^f(T,x))^*K(x)u^g(T,x),dx .
\]
One of the main goals of the paper is to determine the microlocal structure of $C_K^T$.

For simplicity, let us first restrict our attention to the set of rays that reach time $T$ without going through any reflection. One may expect the same-sheet part of $C_K^T$, namely the interaction between two $\alpha$-beams, to encode the scattering relation. A different feature in the multi-velocity setting comparing to one-velocity system is that, for a general weight $K$, the operator $C_K^T$ may also contain cross-sheet information: that is, the interaction between an $\alpha$-beam and a $\beta$-beam at time $T$. Microlocally, these terms have canonical relation of the form $C_\beta^t\circ C_\alpha$, where $C_\alpha$ and $C_\beta$ are the canonical relations mapping boundary covectors to the time $T$ slice along the $\alpha$- and $\beta$-characteristic flows. Therefore $C_K^T$ may contain information about pairs of rays belonging to different characteristic sheets which arrive at the same point and covector on the time slice. However, such cross-sheet interaction may not be visible: in particular, we prove that it is hidden for $C^T_M$ where the operator $A$ is self-adjoint with respect to $M$. We prove this in Section \ref{sec: normal operator for general multi velocity system}, see Theorem \ref{thm: microlocal} for the microlocal characterization of $K$-normal operators and Remark \ref{rmk: self adjoint too strong} for some further discussion.

To also include beams that have reflected, we need to discuss about mode conversion for multi-velocity systems. Mode conversion refers to the phenomenon that, when an incoming $\alpha$-beam reaches the boundary, it may generate not only the $\alpha$-beam reflection according to Snell's Law, but also an outgoing $\beta$-beam. This mode conversion may happen for each reflection for any beam, which increases the number of beams exponentially. We refer to Figure \ref{fig: mode conversion} for an illustration of mode conversion, Section \ref{sec: mode conversion} for related discussion, and \cite{Tay75, HU03, SUV18} for its affect on Dirichlet-to-Neumann map. We emphasize that mode conversion and cross-sheet interaction are two different behaviors: mode conversion only occurs at the lateral boundary when the metric is smooth, while the cross-sheet interaction at time $T$ slice refers to whether two beams corresponding to different characteristic sheets interact or not. Mode conversion does make the microlocal property of $C^T_K$ more complicated, but we focus on studying how different choices of $K$ affect the cross-sheet interaction, which determines the microlocal property of $C^T_K$ in a more fundamental way.

\subsection{Motivation}
The study of $C_K^T$ is motivated by the Boundary Control method. For the scalar wave equation $(\partial_t^2 - \Delta_g)u=0$, the operator
\[
C^T=(W^T)^{*_g}W^T
\]
where the adjoint $*_g$ is with respect to the standard volume form $dV_g$, is usually called the \emph{connection operator}. Through the Blagoveshchenskii identity (see for example \cite{Bla69, Bel07}), it can be determined from the Dirichlet-to-Neumann map, hence revealing the inner products of waves at time $T$ without observing the solution in the interior. In the classical Boundary Control method, this operator is used together with approximate controllability to construct the wave model of the manifold, we refer to \cite{Bel07} for more details on Boundary Control method.

For multi-velocity systems the situation is less direct. Different characteristic families propagate with different speeds, so the usual controllability result no longer gives an immediate description of all components of the time $T$ state. In particular, there may be regions which are reached by a faster wave before they are reached by a slower one, and standard controllability arguments do not by themselves explain what state can be reached there. One of the most important such systems is the isotropic elasticity system (also known as Lam\'e system):
\[
L_{\lambda,\mu,\rho}u
:=
\rho \partial_t^2 u-\nabla\cdot S^L(x,u)=0,
\]
where $\rho(x), \lambda(x), \mu(x)$ are variable coefficients and
\[
S^L_{ij}(x,u)
=
\lambda \widetilde\varepsilon_{mm}(u)\delta_{ij}
+
2\mu \widetilde\varepsilon_{ij}(u),
\qquad
\widetilde\varepsilon_{ij}(u)
=
\frac12(\partial_i u_j+\partial_j u_i).
\]
The system naturally has a two-velocity structure
\[
c_P^2 = \frac{\lambda+2\mu}{\rho}, \quad c_S^2 = \frac{\mu}{\rho}.
\]
In \cite{BL02}, the authors proved approximate controllability in the slow wave region, but the proof does not directly work for the fast wave region. We shall discuss in more details about this system in Section \ref{sec: application to lame system} as it is one of the main motivations. We refer to \cite{Bel07, BL02} for a more thorough discussion on the lack of approximate controllability for multi-velocity systems. For other related papers on Lam\'e system not mentioned below, see \cite{HUV20, HNZ17, HNZ19, Zou24, LMOSZ26}.

This difficulty motivates a more microlocal question: rather than first proving controllability and then constructing the wave model, one may ask what geometric information is already contained in the operator $C_K^T$ itself. The main observation of this paper is that, for multi-velocity systems, $C_K^T$ with suitable $K$ contains data related to how two velocities interact with each other, but this information is masked when the operator is self-adjoint with the pairing.

Microlocal analysis has been extensively applied to various inverse problems, including multi-velocity systems such as the Lam\'e system. A common strategy is to use the fact that microlocally one can track the propagation along each characteristic sheet, and then reduce the inverse problem to standard one-velocity problems for each sheet. In this approach, the main geometric data extracted from the boundary measurements are the lens relation, scattering relation, or boundary distance functions associated with the individual velocities. The subsequent inverse problem is then a lens rigidity or boundary rigidity problem for each characteristic family. Thus the coupling structure of the system is treated mainly as something to be removed, so that one may apply the corresponding scalar theory. For microlocal-based approach on Lam\'e system, we refer to \cite{Rac00, HU03, SUV18, ISY26}. For lens and boundary rigidity results, we refer to \cite{SU08, SUV21b}. For inverse problem related to nonlinear elasticity equation which is also based on microlocal method, we refer to \cite{UZ21, UZ24}.

The point of view in this paper is different. We still use microlocal diagonalization, but not only to decouple the system. Rather, we use it to identify precisely which part of the normal operator comes from same-sheet propagation and which part could come from interactions between different sheets. The latter is invisible at the principal polarization level, but it may appear through lower order off-polarization leakage. In this sense, the coupling is not an obstacle, it is a possible source of additional geometric information.

This distinction is also consistent with the role of self-adjointness in the usual Dirichlet-to-Neumann inverse problem. For a self-adjoint hyperbolic system, the Blagoveshchenskii identity determines the natural connection operator, namely the operator $C_M^T$ corresponding to the self-adjoint density $M$. Our microlocal analysis shows that for this particular choice of pairing the cross-sheet canonical relations cancel: $C_M^T$ contains only the same-sheet relations $C_\alpha^t\circ C_\alpha$. Therefore one should not expect the standard Dirichlet-to-Neumann map of a self-adjoint system to reveal cross-sheet collision data.

\subsection{Leakage detection and directional collision}
We now explain how the collision data related to the interaction of two characteristic sheets can be detected and used. A Gaussian beam solution is an approximate solution highly concentrated near some chosen geodesic, and we denote it $u^*_h$, where $*$ will indicate which metric the geodesic belongs to and $h$ is the parameter controlling how concentrated the solution is. For a multi-velocity system, one can thus construct Gaussian beam solutions with respect to different metrics. Suppose there are $g_P$ and $g_S$ two Riemannian metrics, then denote $u^P_h$ and $u^S_h$ the Gaussian beam solutions centered around $\gamma_P$ and $\gamma_S$ geodesics. Since $C^T_K$ is essentially an interior scan, we want to use it to detect whether $\gamma_P$ and $\gamma_S$ released from the boundary at time $t_P$ and $t_S$, respectively, collide or not at time $T$. Specifically, since the solutions are highly concentrated near the ray, one natural expectation is that the $h$-decay rate is fast when the two beams do not collide, and slow when the two beams do collide. In fact, as the Gaussian beam solution is essentially a highly oscillatory solution constructed with respect to a chosen bicharacteristic curve, the inner product measures not just the space distance but also the phase distance. Hence, we should expect non-trivial decay rate only when the two geodesics $\gamma_P$ and $\gamma_S$ intersect at the same location and point in the same direction. Note that this is useful only in multi-velocity system: if there is only one metric, a given point and direction uniquely determines a geodesic, so a geodesic can only collide with itself. There are two obstructions to this naive testing.

The first one being that, for a two-velocity system with distinct wave speeds, the characteristic sets are disjoint at each point. In other words, two geodesics related to different metrics can never share the same spacetime location and spacetime frequency. What makes stationary phase possible is a spatial $L^2$ scan instead of a spacetime $L^2$ interaction. In other words, even though the spacetime frequency differs, it is possible for their spatial projection to align, which gives stationary phase in the spatial $L^2$ inner product.

The second obstruction, is that this stationary phase may not provide any useful information because of the vanishing of amplitudes. For a two-velocity model with characteristic sheets corresponding to $P$- and $S$-metric, the $P$-polarization set and $S$-polarization set have trivial inner product (polarization set is, roughly speaking, the most singular direction, we refer to \cite{Den82} for definition). However, note that this is only a strong restriction for the principal amplitude: starting from subprincipal amplitude, there may not be such requirement. In particular, there is off-polarization leakage for subprincipal symbol, provided certain coupling exists. In other words, denoting the amplitudes as $a^*_0 + ha^*_1 + \cdots$, we expect leakage to be detected not from $a_0^P \cdot a_0^S$, but starting from principal-subprincipal interaction
\[
L_{01} = a_0^P \cdot a_1^S + a_1^P \cdot a_0^S,
\]
together with the second order term in the stationary phase expansion of the principal-principal interaction $L_{00}$.

Combine together, to have (first order) detectable off-polarization leakage, one need to compute
\[
L_{00} + L_{01}
\]
and show that it is non-vanishing. Certainly, one can consider second order interactions $L_{11}+L_{02} + L_{01} + L_{00}$ and even higher order ones.

To connect with the microlocal result we obtained for $C^T_K$, note that the interaction is $K$-dependent. In particular, if the operator is self-adjoint with respect to $M$, then for all orders the total leakage will cancel out perfectly. This is essentially because the system can be microlocally decoupled, and $M$ is also microlocally ``decoupled'' by the same procedure, so the system with respect to $M$-pairing is fully (microlocally) decoupled.

Back to the leakage detection, if $K$ is such that the operator is not self-adjoint with respect to $K$, then one can compute the required leakage detection requirement. In this paper, we explicitly compute the first order leakage detection result for multi-velocity wave systems in Section \ref{sec: toy model} and Lam\'e system in Section \ref{sec: application to lame system} when $K = I$ is the identity matrix, namely the standard $L^2(dx)$ pairing.

The result of leakage detection is a much richer set of geometric data: the directional collision relation (see Definition \ref{def: directional collision relation}). Namely, one has the collision data with respect to $P$-geodesics and $S$-geodesics, instead of the geometric data for each individual metric. This is, arguably, a more natural data for multi-velocity systems because it respects the fact that the system contains several velocities. To the author's knowledge, this collision rigidity problem has not been studied before. When one has several metrics, we study whether this collision data for each pair of metric uniquely determine these metrics in Section \ref{sec: directional collision rigidity problem}.

\subsection{Overview and main results}
In Section \ref{sec: normal operator for general multi velocity system}, we study the microlocal property of $C^T_K$ for different $K$ including the cancellation result when the operator is self-adjoint, see Theorem \ref{thm: microlocal}. Then in Section \ref{sec: toy model}, we demonstrate the off-polarization leakage detection method for wave systems because their polarization sets are fixed for all point and direction, see Theorem \ref{thm: leakage detection} and Theorem \ref{thm: k leakage detection} for the leakage detection results. As the leakage detection result provides directional collision relation, we study related rigidity problem in Section \ref{sec: directional collision rigidity problem}, and the main results are Theorem \ref{thm: 1 metric known} and Theorem \ref{thm: k collision rigidity}. In Section \ref{sec: wave system inverse problem}, we combine leakage detection and collision rigidity to prove an inverse problem for the multi-velocity wave system, see Theorem \ref{thm: k wave inverse problem}. Finally, we apply leakage detection to the Lam\'e system as it is one of the main motivation of this paper and prove an inverse problem, see Theorem \ref{thm: lame}. We also refer to Section \ref{sec: cancellation wave} and Section \ref{sec: cancellation lame} for discussion on self-adjoint cancellation for multi-velocity wave system and Lam\'e system, respectively.

\section*{Acknowledgment}
The author would like to thank Wenkui Du, Katya Krupchyk, Teemu Saksala, Gunther Uhlmann and Lili Yan for helpful discussions. The author would also like to thank Teemu Saksala for introducing Lam\'e system. The Gaussian beam computations in Section \ref{sec: application to lame system} is assisted by ChatGPT-5.5. The discussions with ChatGPT-5.5 about cancellation of $\rho$-weighted inner product is very helpful, which inspired the more general microlocal computation in Section \ref{sec: normal operator for general multi velocity system}.

\section{Normal operator for general multi-velocity system}\label{sec: normal operator for general multi velocity system}
Let $E=\Omega\times \mathbb C^N$ be the trivial vector bundle over $\Omega\subset \mathbb R^n$. Consider a second order hyperbolic system
\[
    \mathcal L u = \mathsf m(x)\partial_t^2 u + P(x,D_x)u ,
\]
where $\mathsf m(x) \in C^\infty(\Omega; \mathrm{End}(E))$ is pointwise invertible and $P$ is a smooth second order pseudodifferential operator. We may multiply the equation by $\mathsf m^{-1}$ and work instead with the normalized system
\[
    L u = \partial_t^2 u + A(x,D_x)u ,
    \qquad A=\mathsf m^{-1}P .
\]
We do not assume at this stage that $A$ is self-adjoint.

Let $a_2(x,\xi)=\sigma_2(A)(x,\xi)$ be the principal symbol of $A$, we assume $a_2(x, \xi)$ is smoothly diagonalizable and the eigenvalues are real and positive, denote by $\lambda_1^2(x,\xi) < \cdots < \lambda_k^2(x, \xi)$ with multiplicity $m_1, \cdots, m_k$. We require that the multiplicity of each eigenvalue is constant on the entire $T^*\Omega$. The principal symbol of $L$ is
\[
    \ell_2(t,x,\tau,\xi)
    =
    -\tau^2 I + a_2(x,\xi).
\]
By the constant multiplicity requirement, the characteristic set splits microlocally into smooth disjoint characteristic sheets
\[
    \Sigma=\{(t,x,\tau,\xi): \det(-\tau^2 I+a_2(x,\xi))=0\} =\bigcup_{\alpha=1}^k \Sigma_\alpha,
\]
where
\[
    \Sigma_\alpha
    =
    \{(t,x,\tau,\xi): \tau^2=\lambda^2_\alpha(x,\xi)\}.
\]
The sheets do not touch each other because
\[
    \lambda^2_\alpha(x,\xi)\neq \lambda^2_\beta(x,\xi),
    \qquad \alpha\neq \beta,\quad \xi\neq0.
\]
Each branch determines a Hamiltonian
\[
    p_\alpha(t,x,\tau,\xi)
    =
    \tau^2-\lambda^2_\alpha(x,\xi).
\]
Note that they do not necessarily induce a Lorentzian metric structure, unless $\lambda^2_\alpha$ is quadratic polynomial in $\xi$. We assume the uniform Lopatinski condition holds for the reduced first order system, so that the system is well-posed (see \cite[Theorem 1.8]{Met17}). For some of the results below, we will include the following extra assumptions:
\begin{itemize}
    \item \textbf{(H)}: $\lambda_\alpha^2$ is quadratic polynomial in $\xi$ with $g_\alpha$ the corresponding Riemannian metric; the boundary $\partial \Omega$ is strictly convex with respect to all $g_\alpha$; all the geodesics for $g_\alpha$ are non-trapping.
\end{itemize}
The above assumption simplifies the argument in the following ways:
\begin{enumerate}
    \item having Riemannian metric structure is common in multi-velocity systems such as the wave system and Lam\'e system we deal with in this paper, and it allows us to use terms related to geodesics;
    \item strict convexity allows us not to deal with complicated glancing behaviors;
    \item non-trapping is usually required for metric recovery results, and we do not need to mention whether a geodesic reaches a boundary or not every time we backtrack along a geodesic.
\end{enumerate}
Certainly, each of the above assumptions can be weakened and dealt with through more explicit computations, and adding these assumptions does not affect the general theme of the paper.

For each branch $\Sigma_\alpha$, define the polarization space
\[
    E_\alpha(t,x,\tau,\xi)
    =
    \ker \ell_2(t, x, \tau, \xi),
    \qquad (t,x,\tau,\xi)\in \Sigma_\alpha .
\]

Again denote $W^T(f):= u^f(T, \cdot)$ the solution with boundary data $f$ at time $T$. Let $K \in C^\infty(\Omega; \mathrm{End}(E))$ be positive definite, and define $K$-weighted inner product as
\[
\langle u, v\rangle_K := \int_\Omega u^*Kv dx
\]
where $u^* = \overline{u}^t$ is the conjugate transpose. Then we denote $(W^T)^*$ as the adjoint operator with respect to standard $L^2(dx)$ inner product, and the $K$-normal operator as
\[
C^T_K := (W^T)^*KW^T, \quad \langle C^T_K f, g\rangle_{[0,T]\times \partial \Omega} = \int_\Omega u^f(T, x)^*K(x)u^g(T, x)dx.
\]

\subsection{Microlocal decoupling}

To study how $K$ determines the microlocal property of $C^T_K$, we start with a microlocal decoupling of the system, the construction follows essentially \cite{Tay75, Tay81}. Terminology-wise, we say a matrix $B$ is \emph{block diagonal with respect to the characteristic sheets}, when it is of the form
\[
B = \begin{pmatrix}
    B_1 & 0 & \dots & 0 \\
    0 & B_2 & \dots & 0 \\
    \vdots & \vdots & \ddots & 0\\
    0 & \dots & 0 & B_k
\end{pmatrix}
\]
where $B_j$ is an $m_j \times m_j$ matrix, and $m_j$ is the multiplicity for the eigenvalue $\lambda_j^2$. We use $\Psi^k$ to denote the set of pseudodifferential operators of order $k$; for a matrix pseudodifferential operator, we follow the notations in \cite{Ste00} and use $\Psi^{\alpha, \beta}$ to indicate that it is of order $\alpha$ for the block entries, and order $\beta$ for the off-block entries.

\begin{lemma}\label{lem: decomposition}
    Let $A(x, D_x) \in \Psi^2(\Omega; \mathrm{End}(E))$ be such that the principal symbol is smoothly diagonalizable with real positive eigenvalues, and the characteristic sheets for the multi-velocity system are disjoint with constant multiplicity. There exists elliptic $U \in \Psi^0(\Omega; \mathrm{End}(E))$ with parametrix $U^{-1}$ such that $B:= U^{-1}AU$ is block diagonal with respect to the characteristic sheets modulo $\Psi^{-\infty}$. Moreover, let $M \in C^\infty(\Omega; \mathrm{End}(E))$ be positive-definite, then $A$ is $M$-self-adjoint, namely $A^*M - MA = \Psi^{-\infty}$ where $A^*$ is the standard adjoint with respect to $L^2(dx)$, if and only if
    \[
    B^*U^*MU -  U^*MUB= \Psi^{-\infty}.
    \]
\end{lemma}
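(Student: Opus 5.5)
The proof has two parts: an iterative block-diagonalization of $A$ (the Taylor decoupling, \cite{Tay75, Tay81}), and an algebraic identity that transfers $M$-self-adjointness between $A$ and $B$.

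\textbf{Step 1: principal diagonalization.} By hypothesis $a_2(x,\xi)$ is smoothly diagonalizable with eigenvalues of constant multiplicity. Hence the spectral projectors
\[
\Pi_\alpha(x,\xi)=\frac{1}{2\pi i}\oint_{\gamma_\alpha}(z-a_2)^{-1}\,dz
\]
are smooth and homogeneous of degree $0$ for $\xi\neq0$. Choosing smooth local frames of their ranges gives a smooth invertible $u_0(x,\xi)$, homogeneous of degree $0$, with $u_0^{-1}a_2u_0=\mathrm{diag}(\lambda_1^2 I_{m_1},\dots,\lambda_k^2 I_{m_k})$. The range bundles may be topologically nontrivial. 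To avoid this, I would either use a partition of unity in $\xi$-directions (the construction is microlocal, so only symbol-level smoothness on $T^*\Omega\setminus 0$ is needed), or replace frames by the projector formalism, writing $U_0$ as an operator with symbol built from the $\Pi_\alpha$.

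Quantize $u_0$ to an elliptic $U_0\in\Psi^0$ with parametrix $U_0^{-1}$. Then $B_0=U_0^{-1}AU_0\in\Psi^2$ is block diagonal at the principal level, so $B_0\in\Psi^{2,1}$.

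\textbf{Step 2: inductive removal of off-block terms.} Suppose $B_j\in\Psi^{2,2-j}$. Set $U_{j+1}=I+X_j$ with $X_j\in\Psi^{-j}$ having purely off-block symbol $x_j$. Then
\[
(I+X_j)^{-1}B_j(I+X_j)=B_j+[B_j,X_j]+\Psi^{2,1-j}\text{-error terms}.
\]
To kill the off-block part $r_j$ (of order $1-j$) of $B_j$, solve the block equation for $\alpha\neq\beta$:
\[
\lambda_\alpha^2 (x_j)_{\alpha\beta}-(x_j)_{\alpha\beta}\lambda_\beta^2=-(r_j)_{\alpha\beta},
\]
which gives $(x_j)_{\alpha\beta}=-(r_j)_{\alpha\beta}/(\lambda_\alpha^2-\lambda_\beta^2)$. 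This is solvable and has order $-1-j$ because the sheets are disjoint with a uniform gap: $|\lambda_\alpha^2-\lambda_\beta^2|\gtrsim|\xi|^2$ by homogeneity and compactness of the cosphere bundle over compact $\Omega$. The choice of $X_j$ is in fact one order better than required. Take $U$ to be an asymptotic (Borel) limit of $U_0(I+X_0)(I+X_1)\cdots$; it is elliptic since each factor is elliptic and the product converges in symbol classes. Then $B=U^{-1}AU$ is block diagonal modulo $\Psi^{-\infty}$. I expect this step to be the main technical obstacle: it requires verifying that the commutator gains exactly match the orders and that the block-diagonal parts generated at each step do not reintroduce off-block terms of the old order.

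\textbf{Step 3: the self-adjointness equivalence.} This is purely algebraic and holds modulo $\Psi^{-\infty}$. Substitute $A=UBU^{-1}$, so that $A^*=(U^{-1})^*B^*U^*$ with $(U^{-1})^*$ a parametrix of $U^*$. Then
\[
U^*(A^*M-MA)U=U^*(U^{-1})^*B^*U^*MU-U^*MUBU^{-1}U\equiv B^*U^*MU-U^*MUB.
\]
Conversely, conjugating the right-hand side by $(U^{-1})^*$ on the left and $U^{-1}$ on the right recovers $A^*M-MA$ up to smoothing errors. Since $U$ and $U^*$ are elliptic with parametrices, and composition with $\Psi^0$ operators preserves $\Psi^{-\infty}$, each side is smoothing if and only if the other is. Note that $M$ enters only as a multiplication operator in $\Psi^0$, so positivity of $M$ is not needed for this equivalence.
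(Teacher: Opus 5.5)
Your proposal is correct and follows essentially the same route as the paper: principal diagonalization by $u_0$, recursive removal of off-block terms by solving $(\lambda_\alpha^2-\lambda_\beta^2)(x_j)_{\alpha\beta}=-(r_j)_{\alpha\beta}$, and the conjugation identity $U^*(A^*M-MA)U\equiv B^*U^*MU-U^*MUB$ for the equivalence. Your multiplicative corrections $(I+X_j)$, versus the paper's additive $I+S_{-1}+S_{-2}+\cdots$, are a cosmetic difference; the one slip is indexing, since the inductive hypothesis should read $B_j\in\Psi^{2,1-j}$ to match $B_0\in\Psi^{2,1}$ and $r_j$ of order $1-j$. With that fixed, the leftover terms you worried about ($[b_1,x_j]$, the Poisson-bracket term, and $X_jB_jX_j$) all have order at most $-j$, which closes the induction.
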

\begin{proof}
    We first assume such $U$ exists. For the reverse direction, suppose $U$ and $B$ are as stated. Then
    \[
    A^*M = (UBU^{-1})^*MUU^{-1} = (U^{-1})^*(U^*MU)BU^{-1} = MA \quad \mathrm{mod} \quad \Psi^{-\infty}.
    \]
    For the forward direction, suppose $A$ is $M$-self-adjoint, then
    \[
    B^*U^*MU = (UB)^*MU = U^*A^*MU = U^*MAU = U^*MUB \quad \mathrm{mod} \quad \Psi^{-\infty}.
    \]
    
    Now we construct $U$ microlocally. Denote $a_2(x, \xi)$ the principal symbol of $A$, the eigenvalues $\lambda_1^2, \cdots, \lambda_k^2$ are all real and positive with constant multiplicity $m_\alpha$ on $T^*\Omega$. By our assumption, $a_2$ is diagonalizable.
    For every $(x, \xi)$, by eigendecomposition we can find $u_0(x, \xi)$ such that $b_2 := u_0^{-1}a_2u_0 = \mathrm{diag}(\lambda_1, \cdots, \lambda_N)$.
    Even though $b_2$ is diagonal, the final operator we obtain will only be block diagonal, where the blocks are given by the dimension of eigenspaces.
    Recall we denote $\Psi^{\alpha, \beta}$ the set of pseudodifferential operators whose block entries have order $\alpha$, and off-block entries have order $\beta$. 
    Let $U_0$ be such that its principal symbol is $u_0$, then
    \[
    U_0^{-1}AU_0 = B_2 + R_0
    \]
    where $B_2 \in \Psi^{2,-\infty}$ has principal symbol $b_2$ and $R_0 \in \Psi^1$.

    Next, we improve the decomposition by eliminating the off-block entries of $R_1$. For $S_{-j} \in \Psi^{-j}$, we write
    \[
    U_k = U_0(I + S_{-1}+ S_{-2} + \cdots + S_{-k}),
    \]
    and require recursively that
    \[
    (U_0(I+S_{-1}+\cdots + S_{-k}))^{-1}AU_0(I + S_{-1} + \cdots + S_{-k}) \in B_2 + \Psi^{1,1-k}.
    \]
    With $S_0 = I$, the base case is done, and suppose this holds for $k$, namely
    \[
    (U_0(I+S_{-1}+\cdots + S_{-k}))^{-1}AU_0(I+S_{-1}+\cdots +S_{-k}) = B_2 + R_k,
    \]
    where $R_k \in \Psi^{1, 1-k}$. Then for $k+1$ we want
    \[
    \begin{split}
        &(U_0(I+S_{-1}+\cdots + S_{-k} + S_{-k-1}))^{-1}AU_0(I+S_{-1}+\cdots +S_{-k} + S_{-k-1})\\
        &= B_k + \Psi^{1,-k}.
    \end{split}
    \]
    Simplifying, this becomes
    \[
    (I+\cdots+S_{-k})R_k + U_0^{-1}AU_0S_{-k-1} = S_{-k-1}B_2 + \Psi^{1,-k}.
    \]
    Use the base case, we have
    \[
    (I+\cdots+S_{-k})R_k + [B_2, S_{-k-1}] = \Psi^{1,-k}
    \]
    because $R_0S_{-k-1} \in \Psi^{-k}$.
    Note that the two terms on the LHS 
    belong to $\Psi^{1,1-k}$ and $\Psi^{1-k}$, respectively, so we only need to deal with the off-block terms. We emphasize that, unlike the scalar case, $[B_2, S_{-k-1}] \in \Psi^{1-k}$ instead of $\Psi^{-k}$ because the matrices may not commute; moreover, the computation will only involve the principal symbol of each term.

    Denote $r_k, s_{-k-1}$ the corresponding principal symbols, we thus require
    \[
    r_k + b_2s_{-k-1} - s_{-k-1}b_2
    \]
    vanishes on off-block entries. Let $\alpha\beta$ be an off-block entry, recall that $b_2 = \mathrm{diag}(\lambda_1^2, \cdots, \lambda_N^2)$, then $\lambda_\alpha \neq \lambda_\beta$, and we can set
    \[
    (s_{-k-1})_{\alpha\beta} = - \frac{(r_k)_{\alpha\beta}}{\lambda_\alpha^2 - \lambda_\beta^2}.
    \]
    This finishes the construction, as we can now take
    \[
    U \sim U_0(I + S_{-1}+S_{-2}+\cdots)
    \]
    such that $U^{-1}AU$ is smoothing on the off-block entries.
\end{proof}

We now define the microlocal projection operators.
\begin{lemma}\label{lem: projection}
    Consider
    \[
    \Pi_\alpha := UP_\alpha U^{-1}
    \]
    where $P_\alpha = \mathrm{diag}(0, \cdots, I_{m_\alpha},\cdots, 0)$ and $m_\alpha$ is the multiplicity of $\lambda_\alpha^2$.
    Then
    \[
    \Pi_\alpha\Pi_\beta = \delta^\alpha_\beta\Pi_\alpha + \Psi^{-\infty}, \quad \sum_\alpha \Pi_\alpha = I + \Psi^{-\infty}, \quad
    [A, \Pi_\alpha] = \Psi^{-\infty}.
    \]
    If $A$ is $M$-self-adjoint, then
    \[
    \quad \Pi_\alpha^*M - M\Pi_\alpha = \Psi^{-\infty}.
    \]
\end{lemma}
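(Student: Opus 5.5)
The first three identities follow directly from Lemma \ref{lem: decomposition} and the algebra of the constant matrices $P_\alpha$. Since $U^{-1}$ is a parametrix of $U$, we have $U^{-1}U = I + \Psi^{-\infty}$. Hence
\[
\Pi_\alpha\Pi_\beta = UP_\alpha U^{-1}UP_\beta U^{-1} = UP_\alpha P_\beta U^{-1} + \Psi^{-\infty} = \delta^\alpha_\beta UP_\alpha U^{-1} + \Psi^{-\infty},
\]
because $P_\alpha P_\beta=\delta^\alpha_\beta P_\alpha$ as constant block projections. Summing over $\alpha$ and using $\sum_\alpha P_\alpha = I$ gives $\sum_\alpha\Pi_\alpha = UU^{-1} = I + \Psi^{-\infty}$. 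For the commutator, write $A = UBU^{-1} + \Psi^{-\infty}$, where $B = U^{-1}AU$ is block diagonal modulo $\Psi^{-\infty}$. Then
\[
[A,\Pi_\alpha] = U[B,P_\alpha]U^{-1} + \Psi^{-\infty}.
\]
Here $[B,P_\alpha]$ only involves the off-block entries of $B$: for a block diagonal $B$ it vanishes identically, because $P_\alpha$ is the identity on one block and zero elsewhere. The off-block part of $B$ is smoothing, so $[B,P_\alpha]\in\Psi^{-\infty}$. Composing with the order-zero operators $U$ and $U^{-1}$ preserves $\Psi^{-\infty}$.

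For the self-adjointness statement, set $\widetilde M := U^*MU$. This is a (pseudodifferential, not multiplication) operator of order $0$, and Lemma \ref{lem: decomposition} gives $B^*\widetilde M = \widetilde M B$ modulo $\Psi^{-\infty}$. The key step is to show that $\widetilde M$ is itself block diagonal modulo $\Psi^{-\infty}$. Granting this, $\widetilde M$ commutes with $P_\alpha$ modulo smoothing, and since $P_\alpha^*=P_\alpha$,
\[
\Pi_\alpha^*M = (U^{-1})^*P_\alpha U^*M = (U^{-1})^*P_\alpha \widetilde M U^{-1} + \Psi^{-\infty} = (U^{-1})^*\widetilde M P_\alpha U^{-1} + \Psi^{-\infty} = MUP_\alpha U^{-1} + \Psi^{-\infty} = M\Pi_\alpha + \Psi^{-\infty}.
\]
Here we inserted $UU^{-1}$ after $M$ and used $(U^{-1})^*U^* = I+\Psi^{-\infty}$.

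The block-diagonality of $\widetilde M$ is the main obstacle, and I would prove it by the same recursive symbol argument as in Lemma \ref{lem: decomposition}. Suppose the off-block part $\widetilde M_{\mathrm{off}}$ of $\widetilde M$ lies in $\Psi^{j}$, with principal symbol $m_j$ on the off-block entries. The block-diagonal part of $B$ commutes with the block structure, so taking the off-block part of $B^*\widetilde M - \widetilde M B \in\Psi^{-\infty}$ gives
\[
B_{\mathrm{diag}}^*\widetilde M_{\mathrm{off}} - \widetilde M_{\mathrm{off}}B_{\mathrm{diag}} \in \Psi^{j+1},
\]
where the remainder is of lower order than the a priori $\Psi^{j+2}$. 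The principal symbol of $B^*_{\mathrm{diag}}$ is $b_2^* = b_2 = \mathrm{diag}(\lambda^2)$, which is real. Hence for an off-block entry $\alpha\beta$ the principal symbol of the left side is $(\lambda_\alpha^2-\lambda_\beta^2)(m_j)_{\alpha\beta}$. Since $\lambda_\alpha^2\neq\lambda_\beta^2$, this forces $(m_j)_{\alpha\beta}=0$, so $\widetilde M_{\mathrm{off}}\in\Psi^{j-1}$. Starting from $j=0$ and iterating, $\widetilde M_{\mathrm{off}}\in\Psi^{-\infty}$.

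Two points in this step need care. First, the lower-order terms of $B_{\mathrm{diag}}$ (including $B^*$'s subprincipal corrections) are themselves block diagonal. So every error term stays consistent with the block decomposition, and only the principal commutator with $b_2$ affects the off-block order. Second, the eigenvalues $\lambda_\alpha^2$ must be real, so that the relevant symbol of $B^*$ is $b_2$ and not $\bar b_2$; this holds by the standing assumption.
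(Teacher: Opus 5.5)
Your proposal is correct and follows the paper's proof. The first three identities come from the same algebra of $U$, $P_\alpha$ and $U^{-1}$ modulo smoothing operators. For the self-adjoint case, you reduce, exactly as the paper does, to showing that $U^*MU$ is block diagonal modulo $\Psi^{-\infty}$, via an order-by-order symbol argument that uses $\lambda_\alpha^2\neq\lambda_\beta^2$, and then commute it with $P_\alpha$. Your splitting $\widetilde M=\widetilde M_{\mathrm{diag}}+\widetilde M_{\mathrm{off}}$ is a tidier bookkeeping of the paper's recursion: it avoids writing out terms like $b_1^*\mathsf g_0$ and the Poisson bracket, and it shows the isolated off-block expression is actually smoothing, not merely in $\Psi^{j+1}$.
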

\begin{proof}
    Throughout the proof, everything is modulo $\Psi^{-\infty}$, and we omit it for simplicity. By definition,
    \[
    \Pi_\alpha \Pi_\beta = UP_\alpha P_\beta U^{-1} = \delta^\alpha_\beta \Pi_\alpha,
    \]
    and
    \[
    \sum_\alpha \Pi_\alpha = U\sum_\alpha P_\alpha U^{-1} = UU^{-1} = I.
    \]
    Moreover,
    \[
    [A, \Pi_\alpha] = AUP_\alpha U^{-1} - UP_\alpha U^{-1}A = UBP_\alpha U^{-1} - UP_\alpha B U^{-1} = UB_jU^{-1}-UB_jU^{-1} = 0,
    \]
    where $B = \mathrm{diag}(B_1, \cdots, B_k)$ and by abuse of notation we also use $B_j = \mathrm{diag}(0, \cdots, B_j, \cdots, 0)$.
    
    Now suppose $A$ is $M$-self-adjoint. To prove the last equality, denote $G := U^*MU$, we first prove $G$ is also block-diagonal, in fact $G \in \Psi^{0,-\infty}$. Denote the principal symbol $\mathsf g_0$, then for any off-block entry $\alpha\beta$,
    \[
    (b_2^*)_{\alpha\gamma}(\mathsf g_0)_{\gamma\beta} = (\mathsf g_0)_{\alpha\gamma}(b_2)_{\gamma\beta}.
    \]
    Recall in proof of Lemma \ref{lem: decomposition} we showed the principal symbol $b_2$ of $B$ is diagonal, hence
    \[
    \lambda_\alpha^2(\mathsf g_0)_{\alpha\beta} = \lambda_\beta^2 (\mathsf g_0)_{\alpha\beta},
    \]
    which forces $(\mathsf g_0)_{\alpha\beta} = 0$ since $\lambda_\alpha^2 \neq \lambda_\beta^2$. This proves $G \in \Psi^{0,-1}$, then we improve it to $\Psi^{0,-2}$.

    Indeed, suppose $G \in \Psi^{0,-1}$ and let $(\mathsf g_{-1})_{\alpha\beta}$ be the principal symbol for the off-diagonal entry. Then the order 1 terms for $B^*G$ is
    \[
    b_1^*\mathsf g_0 + b_2^*\mathbf{g}_{-1} + \frac{1}{2i}\{b_2, \mathbf{g}_0\}.
    \]
    Evaluate at the off-block $\alpha\beta$ entry gives
    \[
    \lambda_\alpha^2(\mathsf g_{-1})_{\alpha\beta}
    \]
    because $b^*_1\mathsf g$ and $\{b_2,\mathsf g_0\}$ are both still block diagonal. Similarly one obtains the contribution from $GB$ is
    \[
    \lambda_\beta^2 (\mathsf g_{-1})_{\alpha\beta},
    \]
    and together we obtain again $(\mathsf g_{-1})_{\alpha\beta} = 0$ for all off-block entries. Repeat for lower orders, one has $G \in \Psi^{0,-\infty}$ is block diagonal as well. Thus $G$ commutes with $P_\alpha$, and we have
    \[
    \Pi_\alpha^*M = (U^{-1})^*P_\alpha U^*M = (U^{-1})^*P_\alpha G U^{-1} = (U^{-1})^*GP_\alpha U^{-1} = M\Pi_\alpha.
    \]
\end{proof}

\subsection{Mode conversion}\label{sec: mode conversion}
If locally $\partial \Omega$ is given by $x^n = 0$, we define $\pi$ the projection map
\[
\pi: T^*((0, T) \times \Omega)|_{(0, T) \times \partial \Omega} \to T^*((0, T) \times \partial \Omega), \quad (t, x', 0, \tau, \xi', \xi_n) \mapsto (t, x', \tau, \xi').
\]
For any $(t, x', \tau, \xi') \in T^*((0, T) \times \partial \Omega)$, consider
\[
\Sigma_\alpha \cap \pi^{-1}(t, x', \tau, \xi').
\]
The cardinality of the above set can only be 0, 1 or 2, and accordingly we group such $(t, x', \tau, \xi')$ and call them elliptic $\mathcal{E}_\alpha$, glancing $\mathcal{G}_\alpha$ and hyperbolic $\mathcal{H}_\alpha$ sets, respectively (see \cite[Chapter XXIV]{Hor07}).
We now briefly explain mode conversion (see Figure \ref{fig: mode conversion}), which only occurs at the boundary when there are multiple layers of characteristic sheets. The focus of this paper is the interaction of characteristic sheets in the interior at time $T$ slice instead of mode conversion on the lateral boundary, thus for more detailed explanation of mode conversion on the boundary for multi-velocity system, we refer to \cite{Tay75, HU03, HUZ22, UZ21, UZ24}.

Indeed, suppose the incoming ray is with respect to the characteristic sheet $\Sigma_\alpha$, and arrives at $(t, x)$ with corresponding covector $(\tau, \xi)$ transversally ($\xi_n \neq 0$). The polarization set is thus $E_\alpha(t, x, \tau, \xi)$. In terms of optics solution or Gaussian beam solutions, the leading amplitude $a^{\alpha, \mathrm{inc}}_0$ lies in this polarization set. The $\alpha$-reflection follows the bicharacteristic starting from $(t, x, \tau, \xi', -\xi_n)$ where we take normal coordinate near the boundary and locally $x^n = 0$ denotes the boundary. However, the leading amplitude for the reflection $\alpha$-beam has polarization set $E_\alpha(t, x, \tau, \xi', -\xi_n)$, which may not contain $a_0^{\alpha,\mathrm{inc}}$. As a result, additional $\beta$-beams may be required to cancel out the income $\alpha$-ray, at least on the principal level. Then for each characteristic sheet $\Sigma_\beta$, the situation is divided into three categories, depending on whether the projected covector $(t, x', \tau, \xi')$ lies in $\mathcal{E}_\beta$, $\mathcal{H}_\beta$ or $\mathcal{G}_\beta$.
\begin{enumerate}
    \item If in $\mathcal{E}_\beta$, then locally after the microlocal projection $\Pi_\beta$, the projected solution is smooth in the interior but has singular trace, corresponds to an \emph{evanescent wave}.
    \item If in $\mathcal{H}_\beta$, then a future pointing $\beta$-beam is generated from $(t, x, \tau, \xi^{(\beta)})$, the preimage of the projected covector intersecting $\Sigma_\beta$ with forward in time bicharacteristic, and with leading amplitude lying in $E_\beta(t, x, \tau, \xi^{(\beta)})$.
    \item If in $\mathcal{G}_\beta$, then a future pointing $\beta$-gliding beam (because the boundary is assumed to be strictly convex here, otherwise it is more complicated, see \cite[Chapter XXIV]{Hor07}) is generated from $(t, x, \tau, \xi^{(\beta)})$, the unique preimage of the projected covector intersecting $\Sigma_\beta$, and with same leading amplitude lying in $E_\beta(t, x, \tau, \xi^{(\beta)})$.
\end{enumerate}
Note that reflection can not be all evanescent waves, since the covector lies in $\mathcal{H}_\alpha$. With contribution from all characteristic sheets, the incoming $\alpha$-beam can be canceled out on the boundary, at least on the principal level. A recursive treatment can then cancel out the incoming $\alpha$-beam on the boundary up to smoothing error.

Since we will only work with directions that are away from the glancing sets, let us now describe in more details the mode conversion for these directions, where only evanescent waves and reflection waves can happen. Suppose $\Gamma \subset \mathcal{H}_\alpha$ is the conic Lagrangian submanifold corresponds to an incoming Lagrangian distribution $u^{\alpha, \mathrm{inc}}|_{x^n=0}$ generated from some conormal distribution Dirichlet data. For the set of $\beta$ such that $\Gamma \subset \mathcal{H}_\beta$, denote $\Gamma^{\beta, +} \sqcup \Gamma^{\beta, -} = \pi^{-1}\Gamma \cap \Sigma_\beta$ the outward and inward pointing lightlike preimage. Also denote $\mathcal{F}^{\beta, \pm}$ the extension of $\Gamma^{\beta, \pm}$ into the interior along $\beta$-bicharacteristics. Locally, the incoming Lagrangian distribution has the form
\[
u^{\alpha, \mathrm{inc}}(t, x) = \int e^{i\phi^{\alpha, \mathrm{inc}}(t, x, \tau, \xi)} a^{\alpha, \mathrm{inc}}(t, x, \tau, \xi) d\tau d\xi
\]
such that
\[
\{(t, x, d_t\phi^{\alpha, \mathrm{inc}}, d_x\phi^{\alpha, \mathrm{inc}}): d_{\tau, \xi}\phi^{\alpha, \mathrm{inc}} = 0\}
\]
parametrizes $\mathcal{F}^{\alpha, -}$ and
\[
a^{\alpha, \mathrm{inc}} \sim a_0^{\alpha, \mathrm{inc}} + a_1^{\alpha, \mathrm{inc}} + \cdots,
\]
is a classical symbol. Then the reflection wave has the form
\[
u^\mathrm{ref} = \sum_{\beta: \Gamma \subset \mathcal{H}_\beta}\int e^{i\phi^{\beta, \mathrm{ref}}}a^{\beta,\mathrm{ref}} + \sum_{\beta: \Gamma \subset \mathcal{E}_\beta} \int e^{i\phi^{\beta, \mathrm{eva}}}a^{\beta,\mathrm{eva}}
\]
where the symbols are also classical.

For every $(t, x', \tau, \xi') \in \Gamma$, solve $\xi_n$ from
\[
\det (\tau^2-a_2(x',0,\xi', \xi_n)) = 0.
\]
Note that $\tau^2-a_2$ is the product of $k$ equations each corresponds to one of the $\beta$-characteristics.
For each $\beta$ such that $\Gamma \subset \mathcal{H}_\beta$, $\pi^{-1}(t, x', \tau, \xi') \cap \Sigma_\beta$ provides two solutions, with one and only one of them corresponds to forward in time bicharacteristic in the interior, denote it as $\xi^\beta_n$. For each $\beta$ such that $\Gamma \subset \mathcal{E}_\beta$, the equation corresponds to it has no real solution, and we choose the complex root that corresponds to exponential decaying into the interior, also 
denote it as $\xi^\beta_n$. Then we require
\[
\phi^{\beta, *}|_{x^n=0} = \phi^{\alpha, \mathrm{inc}}|_{x^n=0}, \quad \partial_n\phi^{\beta, *}|_{(t, x')} = \xi_n^\beta
\]
for $* = \mathrm{ref,eva}$.

Let $E_\beta$ be the kernel of $\tau^2-a_2(x',0,\xi',\xi_n^\beta)$, note that it is precisely the polarization set for those $\beta$ such that $\Gamma \subset \mathcal{H}_\beta$. Then we have
\[
\bigoplus_\beta E_\beta \simeq \mathbb{C}^N,
\]
this follows for non-glancing directions from the uniform Lopatinski condition, as shown in \cite[Section 5]{Tay75} (where the condition is called the Kreiss condition). For the later examples, namely multi-velocity wave system and Lam\'e system, this can be explicitly computed. Indeed, for multi-velocity wave system the polarization sets are location and direction independent, so this always holds; for Lam\'e system, see for example \cite[Section 3]{UZ24}.

Denote $\mathcal{P}_\beta$ the unique decomposition of $\mathbb C^N$ according to $E_\beta$, then for $(t, x', \tau, \xi') \in \Gamma$, the principal symbol of the reflection wave on $\Gamma^{\beta, +}$ is given by
\[
a_0^{\beta, \mathrm{ref}} = -\mathcal{P}_\beta a_0^{\alpha, \mathrm{inc}}.
\]
Follow the standard optics construction, by applying the operator to the $\beta$-reflection wave and matching degree, we have the phase function in the interior solves the Eikonel equation, while the principal amplitude solves a transport equation along the $\beta$-Hamiltonian flow and lives in the polarization set. The subprincipal amplitude $a_1^{\beta, \mathrm{ref}}$, on the other hand, does not necessarily live in the polarization set. In particular, it can be decomposed into the sum of two parts: $a_{1, \parallel}^{\beta, \mathrm{ref}} + a_{1, \perp}^{\beta, \mathrm{ref}}$, where the parallel part lives in the polarization set and the orthogonal part can be solved algebraically from the leading amplitude. The parallel part on $\Gamma^{\beta, +}$ is again given by $\mathcal{P}_\beta$ and $a_1^{\alpha, \mathrm{inc}}$, and is solved in the interior along the transport equation along $\beta$-Hamiltonian flows. Recursively solving the lower order amplitudes, the reflection waves are determined.
For this recursive construction of optic solutions in the interior, we refer to Section \ref{sec: beam solutions} and Section \ref{sec: lame beam solutions} for detailed computations in the multi-velocity wave system and Lam\'e system as examples.

The principal symbol of the evanescent wave on $\Gamma$ is given by
\[
a_0^{\beta, \mathrm{eva}} = -\mathcal{P}_\beta a_0^{\alpha, \mathrm{inc}}.
\]
In particular, the evanescent wave solves a parabolic type equation with $x^n$ treated as time moving into the interior, and is thus smooth in the interior. We refer to \cite{Tay75} for a detailed discussion, and also Section 1 of \cite{LU89} where the authors deal with the symbol in the elliptic region by solving a backward heat equation. For microlocal analysis the evanescent waves are thus part of the smooth remainder in the interior.

Therefore, for an incoming Lagrangian distribution associated with $\mathcal{F}^{\alpha, -}$ such that the projection on the boundary $\Gamma$ stays away from the glancing regions, the outgoing solution can be decomposed into $\beta$-reflection waves that are outgoing Lagrangian distributions associated with $\mathcal{F}^{\beta, -}$ for $\beta$ such that $\Gamma \subset \mathcal{H}_\beta$, and $\beta$-evanescent waves that is smooth in the interior for $\beta$ such that $\Gamma \subset \mathcal{E}_\beta$.

\begin{figure}[htbp]
    \centering

    \begin{subfigure}{0.32\textwidth}
        \centering
        \includegraphics[width=\linewidth]{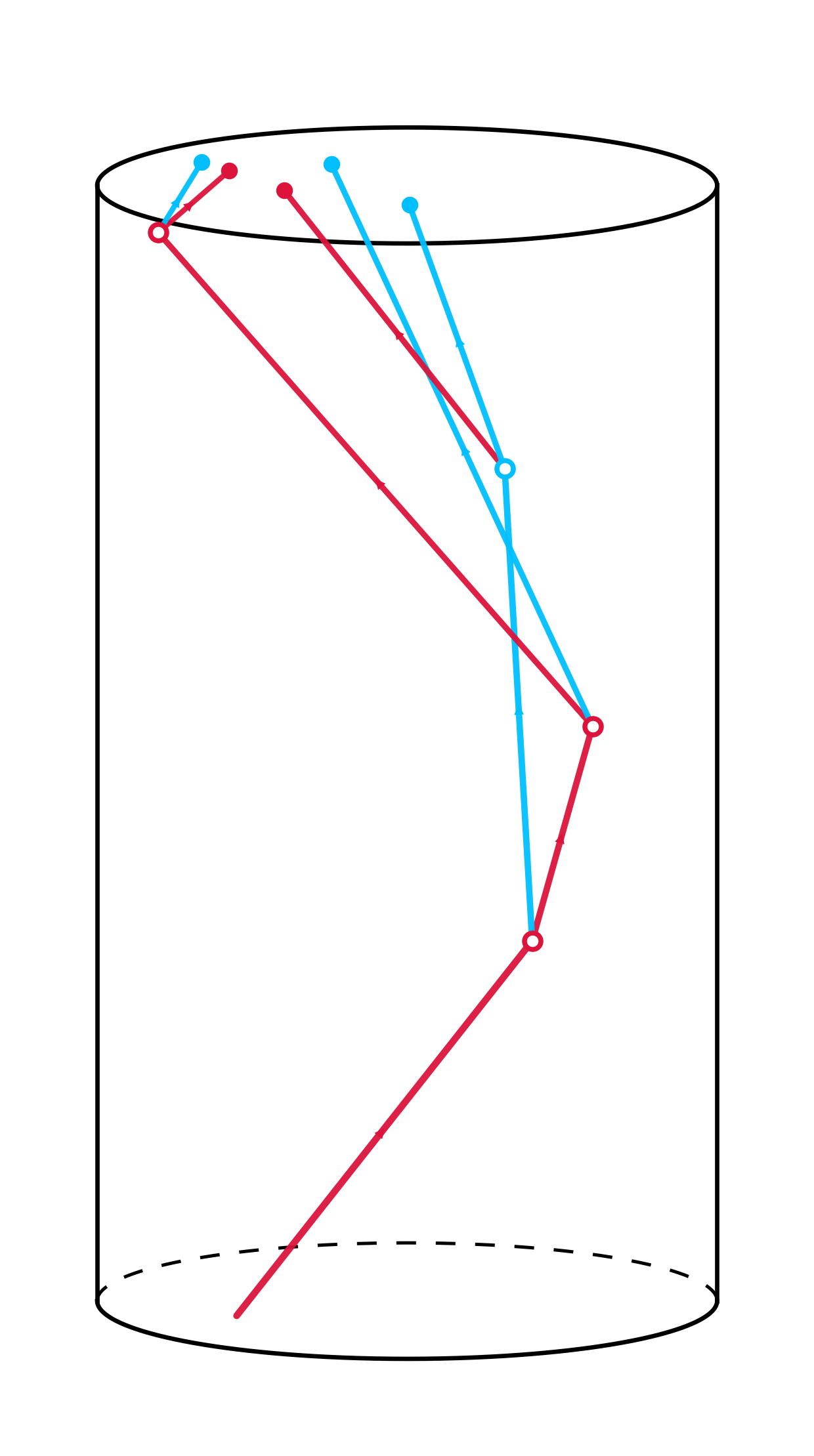}
        \caption{}
        \label{fig:a}
    \end{subfigure}
    \hfill
    \begin{subfigure}{0.32\textwidth}
        \centering
        \includegraphics[width=\linewidth]{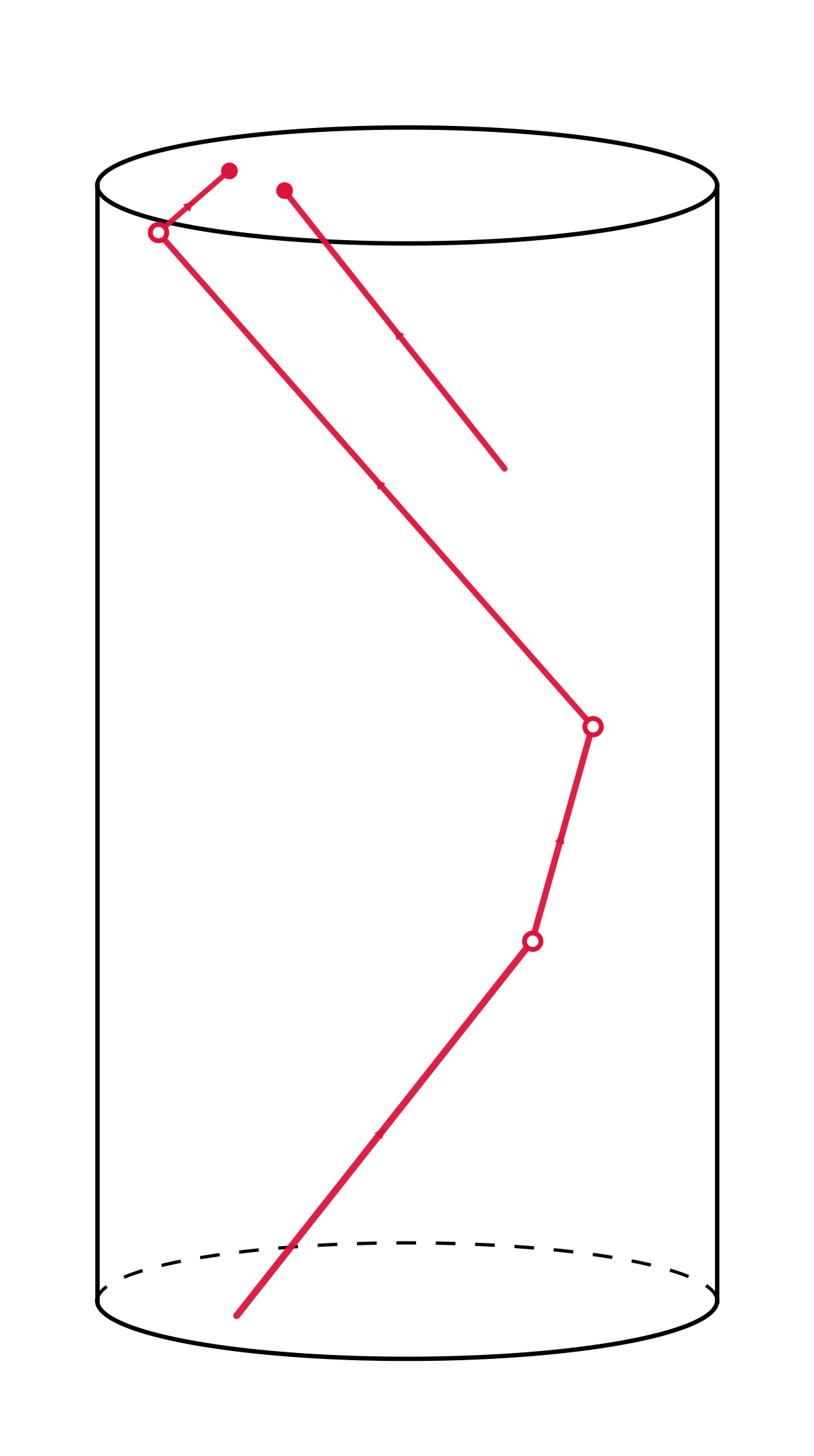}
        \caption{}
        \label{fig:b}
    \end{subfigure}
    \hfill
    \begin{subfigure}{0.32\textwidth}
        \centering
        \includegraphics[width=\linewidth]{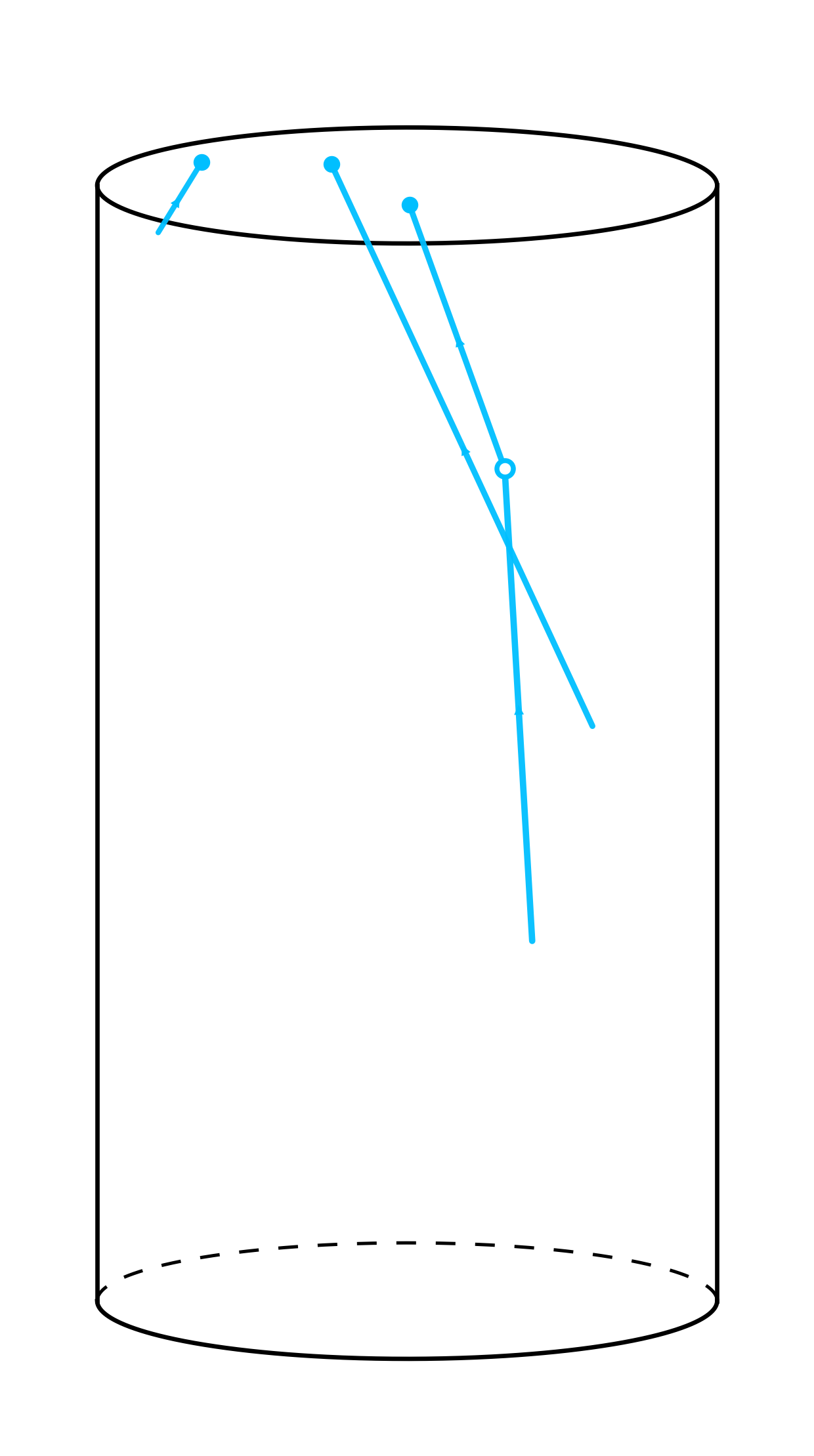}
        \caption{}
        \label{fig:c}
    \end{subfigure}

    \caption{When the metrics are smooth, mode conversion only occurs on the lateral boundary. In $(A)$, the singularity is set such that the initial propagation only occurs along red beams. However, when the beam reaches the boundary, the reflection may contain both the read beam according to Snell's Law and the blue beam from mode conversion. This may happen (or not) for every reflection for both red and blue beams, resulting in a tree-like structure for the solution $u^f$. A microlocal decoupling can separate out the propagation along each characteristic sheets, thus $(B)$ and $(C)$ corresponds to $\Pi_\alpha u^f$, namely the solution microlocally projected onto the red and blue characteristic sheets, respectively. Thus, even though the boundary Dirichlet data for the solution $u^f$ may only be singular at a single location, the boundary Dirichlet data for the microlocally projected solution $\Pi_\alpha u^f$ may be singular at multiple locations due to mode conversion. Mode conversion is different from the cross-sheet interaction we study in this paper, which is the interaction between the modes at time $T$. In other words, we focus on whether the red and blue beam at time $T$ slice interact with each other or not based on the choice of $K$.}
    \label{fig: mode conversion}
\end{figure}

Now let us introduce some notations to describe the flows along which the singularities flow.
\begin{definition}
    A generalized broken bicharacteristic is a piecewise continuous curve in $T^*((0, T)\times \Omega)$ such that
    \begin{enumerate}
        \item it flows strictly monotone in time;
        \item every maximally connected component in the interior is a bicharacteristic with respect to $\Sigma_\alpha$ for some $\alpha$;
        \item every maximally connected component in the boundary is an integral curve of glancing vector field with respect to $\Sigma_\beta$ for some $\beta$;
        \item the location of discontinuity only occurs at the boundary, and the projection under $\pi$ for the two ends are equal to each other.
    \end{enumerate}
    An $\alpha$-generalized broken bicharacteristic is a generalized broken bicharacteristic that completely lies in $\Sigma_\alpha$, namely without mode conversion. A ($\alpha$-)broken bicharacteristic is a ($\alpha$-)generalized broken bicharacteristic that is piecewise ($\alpha$-)bicharacteristic and completely lies in the interior except end points, namely without glancing rays.
\end{definition}
Note that because of the strictly convexity assumption, the glancing behavior is simplified greatly, and we refer to \cite{Hor07} for a detailed definition of bicharacteristic and glancing vector field.
\begin{definition}
    For every $(y, \eta) \in T^*\Omega^\circ \backslash 0$, we denote $C_\alpha(y, \eta)$ the set of $(t, x', \tau, \xi') \in T^*((0, T) \times \partial \Omega)$ such that there exists a generalized broken bicharacteristic from $\pi^{-1}(t, x', \tau, \xi') \cap \Sigma_\beta$ for some $\beta = 1, \cdots, k$ that ends at $(T, y, \tau_\alpha, \eta) \in \Sigma_\alpha$. We define the relation
    \[
    \mathcal{C}_\alpha = \{(y, \eta; t, x', \tau, \xi'): (t, x', \tau, \xi') \in C_\alpha(y, \eta)\}.
    \]
\end{definition}
If $(y, \eta; t, x', \tau, \xi') \in \mathcal{C}_\alpha$ is such that all the generalized broken bicharacteristic from preimage of $(t, x', \tau, \xi')$ to $(T, y, \tau_\alpha, \eta)$ are piecewise bicharacteristics (namely the reflection points are away from glancing sets), then locally it is the image of a canonical map since it is obtained via the composition of several Hamiltonian flows, see \cite{MS78, MS82}. Thus to stay away from the glancing directions, we denote
\[
\mathcal{H} \subset \cup_\alpha \mathcal{H}_\alpha
\]
the set of directions whose corresponding generalized broken bicharacteristics are all piecewise bicharacteristics.
We also denote
\[
\mathcal{C}_\alpha^t := \{(t, x', \tau, \xi'; y, \eta): (y, \eta; t, x', \tau, \xi') \in \mathcal{C}_\alpha\}.
\]

\subsection{Microlocal property of normal operator}
\begin{lemma}\label{lem: W^T_alpha FIO}
    Assume \textbf{(H)} holds. Denote $Q \in \Psi^0((0, T) \times \partial \Omega; \mathrm{End}(E))$ a microlocal cutoff near some $(t, x', \tau, \xi') \in \mathcal{H}$. Denote $W^T_\alpha = \Pi_\alpha W^T$, then $W^T_\alpha Q$ is a finite sum of FIOs, each canonical relation is given by a canonical graph contained in $\mathcal{C}_\alpha$ restricted to $\mathrm{WF}(Q)$, namely
    \[
    \mathcal{C}_\alpha \cap T^*\Omega^\circ \times \mathrm{WF}(Q).
    \]
\end{lemma}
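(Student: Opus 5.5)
I would prove Lemma \ref{lem: W^T_alpha FIO} by building an explicit geometric-optics parametrix for $u^{Qf}$ and then applying $\Pi_\alpha$ at time $T$. The key input is that $\mathrm{WF}(Q)$ lies in a small conic neighborhood of a point of $\mathcal{H}$.

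\emph{Step 1: finiteness of the reflection tree.} By \textbf{(H)}, every $g_\beta$-geodesic is non-trapping and $\partial\Omega$ is strictly convex. Each bicharacteristic segment therefore has length bounded below, uniformly for covectors in a small conic neighborhood of a non-glancing point. Hence on $[0,T]$ only finitely many reflections occur. Since $\mathrm{WF}(Q)$ is small and every broken bicharacteristic from it avoids glancing (by definition of $\mathcal{H}$), the combinatorics of mode conversion is locally constant there. So the set of broken bicharacteristics issuing from $\mathrm{WF}(Q)$ is a finite tree. Each branch is labeled by a sequence of sheets $(\beta_0,\beta_1,\dots,\beta_r)$, and each branch is the composition of finitely many Hamiltonian flows and reflection maps.

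\emph{Step 2: the parametrix.} I first microlocally decouple the equation near the boundary. For $f$ with wavefront set in $\mathrm{WF}(Q)$, the initial solution splits into $\beta_0$-waves for those $\beta_0$ with $\mathrm{WF}(Q)\subset\mathcal{H}_{\beta_0}$. These are Lagrangian distributions associated with $\mathcal{F}^{\beta_0,-}$, with amplitudes obtained from $\mathcal{P}_{\beta_0}$ applied to the data. The evanescent parts are smooth in the interior.

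I then propagate each wave by the eikonal and transport equations along its sheet. At each boundary hit I use the mode conversion construction of Section \ref{sec: mode conversion}: the relation $\bigoplus_\beta E_\beta\simeq\mathbb{C}^N$, together with the recursive matching of the lower order amplitudes, produces the reflected $\beta$-waves and evanescent waves modulo $C^\infty$. Each branch map $f\mapsto(\text{branch component at }t=T)$ is then an FIO, since it is a composition of FIOs associated with canonical graphs. These canonical graphs are the restricted Hamiltonian flows composed with the boundary reflection maps, as in \cite{MS78, MS82}.

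By Duhamel and the well-posedness from \cite[Theorem 1.8]{Met17}, the true solution differs from the parametrix by a smoothing operator. Thus $W^TQ$ equals, modulo smoothing, a finite sum of FIOs $F_b$, one per branch $b$. The canonical relation of $F_b$ is the graph of the composite flow, ending on the final sheet $\Sigma_{\beta_r}$ of the branch.

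\emph{Step 3: applying $\Pi_\alpha$.} This step is the main point. I must show that $\Pi_\alpha F_b$ is smoothing unless $\beta_r=\alpha$, and that $\Pi_\alpha F_b\equiv F_b$ modulo lower-order smoothing when $\beta_r=\alpha$.

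I would argue this way. By Lemma \ref{lem: projection}, $\Pi_\beta$ commutes with $\partial_t^2+A$ modulo $\Psi^{-\infty}$. It also commutes with $\partial_t$ when viewed as an operator in $x$ only, and it acts at each fixed $t$. The term $F_b f$ solves the equation microlocally, with wavefront set inside $\Sigma_{\beta_r}$ in the interior.

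Conjugating by $U$, set $v=U^{-1}u_b$. Then $v$ solves the block-diagonal system $\partial_t^2+B$ modulo smoothing. In that system the block $P_\gamma v$ solves a principally scalar operator $\partial_t^2+B_\gamma$, which is elliptic away from $\Sigma_\gamma$. Hence $\mathrm{WF}(P_\gamma v)\subset\Sigma_\gamma$. Since the sheets are disjoint, $P_\gamma v$ is smooth in the interior for $\gamma\neq\beta_r$. Restricting to $t=T$ is legitimate because $\Sigma$ avoids $\tau$-only directions. It follows that $\Pi_\alpha u_b(T)=UP_\alpha v(T)$ is smooth for $\alpha\neq\beta_r$, and equals $u_b(T)$ modulo smoothing for $\alpha=\beta_r$.

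\emph{Step 4: identifying the canonical relations.} The canonical relations of the surviving $F_b$ are the graphs of the broken flows ending at $(T,y,\tau_\alpha,\eta)\in\Sigma_\alpha$, projected to $T^*\Omega^\circ$. By definition these lie in $\mathcal{C}_\alpha\cap T^*\Omega^\circ\times\mathrm{WF}(Q)$. The projection from the time slice is a local canonical graph because the flow is transversal to $\{t=T\}$.

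I expect the main technical obstacle to be Step 2. It requires justifying the mode conversion construction to all orders uniformly on $\mathrm{WF}(Q)$, in particular handling the evanescent terms and controlling the error of the parametrix. Step 3 is conceptually the key, but it should follow cleanly from Lemma \ref{lem: projection} combined with the disjointness of the sheets.
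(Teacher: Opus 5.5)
Your proposal is correct and follows essentially the same route as the paper: you conjugate by $U$ so that each block $\partial_t^2+B_\gamma$ is characteristic only on $\Sigma_\gamma$, use disjointness of the sheets to see that $\Pi_\alpha$ keeps only singularities lying on $\Sigma_\alpha$, and get the finite-sum FIO structure from the mode-conversion (distorted plane wave) construction with canonical graphs as in \cite{MS78, MS82}. The differences are organizational and in level of detail: you build the branch operators $F_b$ first and then project, whereas the paper projects $u^f$ and then invokes the construction, and you also treat finiteness of the reflection tree and the restriction to $t=T$, which the paper skips. Note, however, that the uniform finiteness in your Step 1 comes from the definition of $\mathcal{H}$ together with compactness, not from non-trapping and strict convexity alone.
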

\begin{proof}
    Let $u^f$ be the solution with Dirichlet data $Qf$. Since $\mathrm{WF}(Q) \subset \mathcal{H}$, all the generalized broken bicharacteristics are piecewise bicharacteristics correspond to reflections and mode conversions. In particular, the wavefront set of $u^f$ can be written as
    \[
    \begin{split}
        \mathrm{WF}(u^f) \subset \bigcup_{\beta = 1}^k \{&\gamma \text{ is a $\beta$-broken bicharacteristic such that}\\
        &\text{$\gamma \subset \tilde \gamma$ a generalized broken bicharacteristic starting from $\mathrm{WF}(Qf)$}\}.
    \end{split}
    \]
    In other words, the singularity of $u^f$ flows along a $\beta$-broken bicharacteristic $\gamma$ only when $\gamma$ is originated from $\mathrm{WF}(Qf)$ directly or generated from mode conversion of some generalized broken bicharacteristic $\tilde \gamma$ originated from $\mathrm{WF}(Qf)$.
    
    Denote $v^f = U^{-1}u^f$, then it solves
    \[
    \partial_t^2 v^f + Bv^f + Rv^f = 0
    \]
    for some smoothing operator $R \in \Psi^{-\infty}$. In particular,
    \[
    W^T_\alpha Qf = UP_\alpha U^{-1}W^TQf = UP_\alpha v^f|_{t=T}.
    \]
    Since $B = \mathrm{diag}(B_1, \cdots, B_k)$ is block diagonal, we have
    \[
    \partial_t^2 v^f_\alpha + B_\alpha v^f_\alpha + R_\alpha v^f = 0
    \]
    where $v^f_\alpha = P_\alpha v^f$ and $R_\alpha = P_\alpha R$ is smoothing. In fact, since $B_\alpha$ has principal symbol $\lambda_\alpha^2$ (see proof of Lemma \ref{lem: decomposition}), by the assumption that $\lambda_\alpha^2$ is quadratic polynomial, the leading order operator for the above equation of $v^f_\alpha$ agrees with wave equation $\partial_t^2 - \Delta_{g_\alpha}$. In particular, the system has only one characteristic sheet $\Sigma_\alpha$, then propagation can only occur with respect to its broken bicharacteristics, and the wavefront set of $Uv^f_\alpha$ in the interior at any time slice is a subset of $\Sigma_\alpha$, see Figure \ref{fig:b} and \ref{fig:c}.
    
    Thus, the wavefront set of $v^f_\alpha$ is
    \[
    \mathrm{WF}(v^f_\alpha) \subset \mathrm{WF}(u^f) \cap \Sigma_\alpha.
    \]
    In other words, the singularity of $v^f_\alpha$ can only flow along $\alpha$-broken bicharacteristic $\gamma$ where $\gamma$ is originated from $\mathrm{WF}(Qf)$ directly or generated from mode conversion of some generalized broken bicharacteristic from $\mathrm{WF}(Qf)$. As a result,
    \[
    \mathrm{WF}(Uv^f_\alpha|_{t = T}) \subset \mathcal{C}_\alpha \circ \mathrm{WF}(Qf).
    \]
    The relation restricted to $T^*\Omega^\circ \times \mathrm{WF}(Q)$ is a canonical relation because all the generalized broken bicharacteristics from $\mathrm{WF}(Q)$ are piecewise bicharacteristics away from any glancing region, which is essentially the composition of scattering relations, see \cite{MS78, MS82}. To show the operator is a finite sum of FIOs, one can follow the distorted plane wave construction (see \cite{KLU18, HUZ22}) by setting up a source term in the exterior region whose local solution matches the boundary data up to smoothing error, and the reflection is constructed in previous section as the sum of reflection waves that are still Lagrangian distributions and smooth evanescent waves. Then $v^f_\alpha$ is a finite sum of such constructions, where each construction corresponds to either the initial boundary Dirichlet data, or new singularity generated from $\alpha$-mode conversion. Note that each construction essentially corresponds to a canonical graph. Two such canonical graphs may even have the same domain and range, this corresponds to two sets of different generalized broken bicharacteristics with the same starting and ending point. However, as they do not experience the same reflection and mode conversion pattern during the procedure, the canonical graphs are in general not the same. Thus it is better to describe $W^T_\alpha Q$ as the sum of FIOs rather than a single FIO, since the locally finite sum of oscillatory integrals may contain phase functions that parametrizes two different Lagrangian submanifolds whose left and right projections coincide as open sets.


\end{proof}

\begin{theorem}\label{thm: microlocal}
    Assume \textbf{(H)} holds. Denote $Q \in \Psi^0((0, T) \times \partial \Omega; \mathrm{End}(E))$ a microlocal cutoff with $\mathrm{WF}(Q) \subset \mathcal{H}$, and $K \in C^\infty(\Omega; \mathrm{End}(E))$ is positive definite.
    Then $Q^*C^T_KQ$ is a locally finite sum of FIOs with canonical relation contained in
    \[
    \bigcup_{\alpha, \beta} \mathcal{C}_\beta^t \circ \mathcal{C}_\alpha.
    \]
    If $A$ is $M$-self-adjoint with respect to some positive-definite $M \in C^\infty(\Omega; \mathrm{End}(E))$, then the canonical relation for $Q^* C^T_M Q$ reduces to subset of
    \[
    \bigcup_\alpha \mathcal{C}_\alpha^t \circ \mathcal{C}_\alpha.
    \]
\end{theorem}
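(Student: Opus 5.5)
We will insert the microlocal partition of unity from Lemma \ref{lem: projection} on both sides of $K$. Since $\sum_\alpha \Pi_\alpha = I$ modulo $\Psi^{-\infty}$, and $W^TQ$ maps into distributions that are smooth near $\partial\Omega$ at time $T$ up to a controlled error, we write
\[
Q^*C^T_KQ = \sum_{\alpha,\beta} Q^*(W^T)^*\Pi_\beta^* K \Pi_\alpha W^T Q + \text{smoothing}
= \sum_{\alpha,\beta} (W^T_\beta Q)^* K (W^T_\alpha Q) + \text{smoothing}.
\]
One point needs care here. Lemma \ref{lem: projection} is modulo $\Psi^{-\infty}$ on $\Omega$. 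We therefore insert a spatial cutoff $\chi\in C_c^\infty(\Omega^\circ)$ equal to one on the relevant region. The region near $\partial\Omega$ at time $T$ only contributes through singularities lying in $\mathcal{C}_\alpha$ anyway. Alternatively, we treat the boundary contribution as part of the locally finite sum, using the strict convexity in \textbf{(H)}.

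By Lemma \ref{lem: W^T_alpha FIO}, each $W^T_\alpha Q$ is a finite sum of FIOs associated with canonical graphs contained in $\mathcal{C}_\alpha\cap T^*\Omega^\circ\times \mathrm{WF}(Q)$. The adjoint $(W^T_\beta Q)^*$ is then a finite sum of FIOs associated with the transposed graphs in $\mathcal{C}_\beta^t$. Multiplication by $K$ is a $\Psi^0$ and does not change canonical relations. Since the relations are canonical graphs, the composition is transversal (indeed clean), and the composition theorem for FIOs gives that each term is an FIO with canonical relation in $\mathcal{C}_\beta^t\circ\mathcal{C}_\alpha$. Local finiteness follows from the finiteness in Lemma \ref{lem: W^T_alpha FIO}, which relies on non-trapping and non-glancing to get finitely many reflection and mode-conversion patterns on $[0,T]$. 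This proves the first claim.

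For the self-adjoint case we take $K=M$. Lemma \ref{lem: projection} gives $\Pi_\beta^*M = M\Pi_\beta$ modulo $\Psi^{-\infty}$, so
\[
\Pi_\beta^* M\Pi_\alpha = M\Pi_\beta\Pi_\alpha = \delta^\alpha_\beta M\Pi_\alpha \quad \mathrm{mod}\ \Psi^{-\infty}.
\]
Hence every cross term with $\alpha\neq\beta$ is of the form $(W^TQ)^*R\,(W^TQ)$ with $R$ smoothing. We then check that this is negligible. $W^TQ$ maps $H^s$ to $H^s$, or at least to distributions of finite order, by the well-posedness of \cite{Met17}. Therefore $R W^T Q$ is smoothing into $C^\infty$, and its composition with $(W^TQ)^*$ has empty canonical relation, again after handling the behavior near $\partial\Omega$ via the cutoff $\chi$. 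Only the diagonal terms $(W^T_\alpha Q)^*M(W^T_\alpha Q)$ survive, and their relations lie in $\mathcal{C}_\alpha^t\circ\mathcal{C}_\alpha$.

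The main obstacle is the boundary issue. The projections $\Pi_\alpha$ are pseudodifferential operators on the open set, so they are defined only on $\Omega^\circ$. We must therefore justify that inserting them, and discarding $\Psi^{-\infty}$ errors, is harmless up to $\partial\Omega$ at time $T$. First we would split $u^f(T)=\chi u^f(T)+(1-\chi)u^f(T)$. Then we would argue, using \textbf{(H)} and the fact that $\mathrm{WF}(Q)\subset\mathcal{H}$, that singularities at time $T$ near $\partial\Omega$ are still transversal rays. This allows us to extend the operators slightly beyond $\Omega$, by extending coefficients, and to apply the decomposition there.
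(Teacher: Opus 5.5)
Your proposal follows the paper's proof: you insert $\sum_\alpha \Pi_\alpha = I$ on both sides of $K$ and compose the canonical graphs from Lemma~\ref{lem: W^T_alpha FIO} for the first claim, then remove the cross terms using $\Pi_\beta^* M \Pi_\alpha = \delta^\alpha_\beta M \Pi_\alpha$ modulo $\Psi^{-\infty}$ from Lemma~\ref{lem: projection} for the self-adjoint claim. Your extra care with the smoothing remainders and with the behavior near $\partial\Omega$ at time $T$ goes beyond the paper, which simply discards all $\Psi^{-\infty}$ terms.
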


\begin{remark}\label{rmk: self adjoint too strong}
    In fact, a weaker condition will cause the vanishing of cross-sheet interaction. In the proof, this is caused by
    \[
    \Pi^*_\alpha M \Pi_\beta = \Psi^{-\infty}
    \]
    when $\alpha \neq \beta$. For $M$-self-adjoint $A$, this is deduced from
    \[
    \Pi_\alpha\Pi_\beta = \delta^\alpha_\beta \Pi_\alpha + \Psi^{-\infty}, \quad \Pi_\alpha^*M = M\Pi_\alpha + \Psi^{-\infty}
    \]
    in Lemma \ref{lem: projection}. A weaker condition is simply directly asking
    \[
    \Pi^*_\alpha M \Pi_\beta = \Psi^{-\infty}, \quad \alpha \neq \beta.
    \]
    This includes $M$ that does not make the operator self-adjoint, but is block diagonal with respect to the microlocal characteristic splitting. For a concrete example, see Section \ref{sec: cancellation wave} and Remark \ref{rmk: example self adjoint too strong}. We choose to work with $M$-self-adjoint case because it is more intuitive.
\end{remark}

\begin{proof}
    By Lemma \ref{lem: decomposition}, there exists elliptic $U \in \Psi^0$ such that $U^{-1}AU = B \in \Psi^{2,-\infty}$.
    Denote $\Pi_\alpha$ the projection operator from Lemma \ref{lem: projection}. Again we omit all the $\Psi^{-\infty}$ terms. Recall $W_\alpha^T = \Pi_\alpha W^T$, then
    \[
    C^T_K = (W^T)^*KW^T = \sum_{\alpha\beta}(W^T_\beta)^*KW^T_\alpha.
    \]
    By Lemma \ref{lem: W^T_alpha FIO}, $W^T_\alpha Q$ is a locally finite sum of FIOs where each canonical relation is a canonical graph and subset of $\mathcal{C}_\alpha$, essentially connected only by generalized broken bicharacteristics that are piecewise bicharacteristics in the interior. Since each of the canonical relation is given by a canonical graph, composition is well-defined. As a result, the composition is still a locally finite sum of FIOs with canonical relation sits inside
    \[
    \mathcal{C}_\beta^t \circ \mathcal{C}_\alpha.
    \]
    Summing up all $\alpha$, $Q^*C^T_KQ$ is thus a locally finite sum of FIOs with canonical relation in
    \[
    \bigcup_{\alpha , \beta} \mathcal{C}_\beta^t \circ \mathcal{C}_\alpha.
    \]
    

    Now assume $A$ is $M$-self-adjoint for some positive definite $M$. Then
    \[
    C^T_M = \sum_{\alpha\beta}(W^T_\beta)^*MW^T_\alpha = \sum_\alpha (W^T_\alpha)^*MW^T_\alpha,
    \]
    where we used $\sum_\alpha \Pi_\alpha = I$, $\Pi_\alpha^*M = M\Pi_\alpha$, and $\Pi_\alpha \Pi_\beta = 0$ for $\alpha \neq \beta$ from Lemma \ref{lem: projection}. Thus the canonical relation for $Q^*C^T_MQ$ reduces to
    \[
    \bigcup_\alpha \mathcal{C}_\alpha^t \circ \mathcal{C}_\alpha.
    \]
\end{proof}

\section{Toy model: leakage detection for multi-velocity wave system}\label{sec: toy model}

We have shown that the cross-sheet interaction is not visible for $C^T_M$ when the operator $A$ is $M$-self-adjoint. For other cases, there is possibility of observing this cross-sheet interaction, and we show that this can provide much richer geometric information. Specifically, we will use Gaussian beam solutions to compute how this interaction can be observed, and how to use this interaction data.

In this section, we demonstrate off-polarization leakage detection with a simple two-velocity wave system on $[0,T]\times \Omega$, where $\Omega\subset \mathbb R^n$:
\begin{equation}\label{eq: 2 wave equation}
    L_{P,S}u := \begin{pmatrix}
        \Box_P & 0 \\
        0 & \Box_S
    \end{pmatrix}u + \sum_{\alpha=1}^n A^\alpha(x) \partial_\alpha u + V(x) u = 0,
\end{equation}
and for $* = P, S$, $\Box_*$ is the wave operator with respect to $c_*^{-2}(x)$:
\[
\Box_* = -\partial_t^2 + \nabla \cdot (c_*^2(x)\nabla).
\]
One may use the Laplace-Beltrami operator with respect to $g_* = c_*^{-2}(x)dx^2$, which only changes the first order terms $A^\alpha$ on the diagonal part, it will not affect the results we have. We assume that
\[
c_P > c_S > 0
\]
at any point in $\Omega$, this is to avoid the overlapping of characteristic sheets. 
The principal symbol is a diagonal matrix $\mathrm{diag}(p_P, p_S)$ with
\[
p_*(t, x, \tau, \xi) = \tau^2 - c_*^2|\xi|^2, \quad * = P,S.
\]
We also denote
\[
A^\alpha = \begin{pmatrix}
    A^\alpha_{PP} & A^\alpha_{PS} \\
    A^\alpha_{SP} & A^\alpha_{SS}
\end{pmatrix}.
\]
The reason why we choose this model to demonstrate is that its polarization sets are clear, with $P$-polarization set being $(1, 0)$ and $S$-polarization set being $(0, 1)$ for any point and direction $(x, \xi)$. For a detailed introduction of polarization sets, see \cite{Den82, HU03}. For simplicity, we shall call the first component the $P$-component, and the second component the $S$-component.

Denote $W^T: f \mapsto u^f(T, \cdot)$ where $u^f$ solves \eqref{eq: 2 wave equation} with vanishing initial data and boundary Dirichlet data $f$ supported in $(0, T)\times \partial \Omega$. The normal operator is given by
\[
C^T_I := (W^T)^*W^T,
\]
where $(W^T)^*$ is the $L^2(dx)$ adjoint and $I$ is the identity matrix.
We also study the more general $k$-velocity system
\begin{equation}\label{eq: k wave equation}
    L_ku := \mathrm{diag}(\Box_{P_1} ,\cdots, \Box_{P_k})u + \sum_{\alpha=1}^n A^\alpha(x) \partial_\alpha u + V(x)u = 0,
\end{equation}
where the velocities satisfy $c_1 > c_2 > \cdots > c_k > 0$, and $A^\alpha = (A^\alpha_{ij})$.

\subsection{Gaussian beams and off-polarization leakage}\label{sec: beam solutions}

We now compute the Gaussian beam solutions and the equations they satisfy. We refer to \cite{KKL01, Ral82} for details about Gaussian beams, optics solutions, and WKB solutions; also see \cite{JJ15, CP83, UZ21} for Gaussian beams in multi-velocity systems. To compute the off-polarization leakage terms, we first recall the local Gaussian beam construction for the model system.  The discussion in this subsection is local near a fixed bicharacteristic, and all equations are understood in the usual finite-order Gaussian beam sense along the central ray.

For $*=P,S$, an $*$-Gaussian beam is an approximate solution of the form
\begin{equation}\label{eq: gaussian beam}
u_h^*(t,x)
=
h^{-n/4}\chi(t,x)e^{i\phi_*(t,x)/h}
\sum_{r=0}^N h^r a_r^*(t,x).
\end{equation}
Here $\chi$ is supported in a small tubular neighborhood of the central ray, the phase $\phi_*$ is complex-valued with
\[
\operatorname{Im}\phi_*(t,x)\geq c\,\operatorname{dist}(x,x_*(t))^2
\]
near the ray, and the amplitudes $a_r^*$ are vector-valued. The factor $h^{-n/4}$ is the standard $L^2$-normalization on a time slice, since the beam is concentrated at spatial scale $h^{1/2}$. It plays no role in the transport equations below.

The phase $\phi_*$ is chosen so that it solves the $*$-eikonal equation to sufficiently high order along the central ray:
\[
p_*(t,x,\partial_t\phi_*,\nabla_x\phi_*)=0.
\]
Equivalently, if $\zeta_*= (\tau_*, \xi_*) = (\partial_t\phi_*,\nabla_x\phi_*)$, then $p_*(\zeta_*)=0$ along the ray, while $p_P(\zeta_S) \neq 0$ and $p_S(\zeta_P) \neq 0$ because the two characteristic sheets are disjoint. The first-order Taylor coefficient of the phase gives the central covector, and the quadratic Taylor coefficient is chosen with positive imaginary part. The usual Riccati equation determines the quadratic part, while higher Taylor coefficients may be chosen recursively. Since we only need a finite-order approximate solution, all these equations are solved only up to a prescribed finite order along the central ray.

We now record the conjugated operator. For a scalar amplitude $a$,
\[
h^2e^{-i\phi/h}\Box_*(e^{i\phi/h}a)
=
p_*(d\phi)a
+
hi\,T_*[\phi]a
+
h^2\Box_* a,
\]
where
\[
T_*[\phi]a
=
-2\phi_t\partial_ta
+
2c_*^2\nabla\phi\cdot\nabla a
+
(\Box_*\phi)a.
\]
The first-order coupling terms give
\[
h^2e^{-i\phi/h}
A^\alpha\partial_\alpha(e^{i\phi/h}a)
=
hi\,A^\alpha(\partial_\alpha\phi)a
+
h^2A^\alpha\partial_\alpha a.
\]
Thus, writing
\[
A(\nabla\phi):=\sum_{\alpha=1}^n A^\alpha\partial_\alpha\phi,
\]
we obtain
\[
h^2e^{-i\phi/h}L_{P,S}(e^{i\phi/h}a)
=
P(d\phi)a
+
hi\left(
\begin{pmatrix}
T_P[\phi]&0\\
0&T_S[\phi]
\end{pmatrix}a
+
A(\nabla\phi)a
\right)
+
h^2R_\phi a,
\]
where $R_\phi$ contains the remaining second-order terms acting on the amplitude, the derivatives $A^\alpha\partial_\alpha a$, and the zero-order term $Va$.

Substituting
\[
a\sim \sum_{r=0}^N h^r a_r
\]
gives the recursive equations
\[
P(d\phi)a_0=0,
\]
and, for $r\geq 0$,
\[
P(d\phi)a_{r+1}
+
i\left(
\begin{pmatrix}
T_P[\phi]&0\\
0&T_S[\phi]
\end{pmatrix}a_r
+
A(\nabla\phi)a_r
\right)
+
R_\phi a_{r-1}
=0,
\]
with the convention $a_{-1}=0$. These equations are imposed along the central ray, together with the corresponding transverse Taylor equations, to the finite order required by the construction.

For a $P$-beam, the leading amplitude is chosen in the $P$-polarization space:
\[
a_0^P=
\begin{pmatrix}
b\\
0
\end{pmatrix}.
\]
Since $p_P(d\phi_P)=0$ and $p_S(d\phi_P)\neq 0$, the equation
\[
P(d\phi_P)a_0^P=0
\]
forces the leading $S$-component to vanish. The order $h$ equation then gives a transport equation for $b$ in the $P$-component and an algebraic equation for the first $S$-component of $a_1^P$. This algebraic component is the first off-polarization leakage of a $P$-beam.

Similarly, for an $S$-beam,
\[
a_0^S=
\begin{pmatrix}
0\\
d
\end{pmatrix},
\]
and the equation $P(d\phi_S)a_0^S=0$ forces the leading $P$-component to vanish. The order $h$ equation gives the transport equation for $d$ and an algebraic equation for the first $P$-component of $a_1^S$.


\subsubsection{P-beams}


We now compute the local $P$-beam construction near some point of interest up to the first off-polarization leakage. For notation simplicity, we omit the cutoff function and write asymptotically
\[
u_h^P \sim e^{i\phi_P/h}(a_0^P+h a_1^P+h^2a_2^P+\cdots),
\qquad 
a_j^P=\binom{(a_j^P)_P}{(a_j^P)_S},
\]
with the understanding that the Gaussian beam solutions have the rigorous form \eqref{eq: gaussian beam}.
As explained above, we choose the leading amplitude to be purely $P$-polarized:
\[
a_0^P=\binom{b}{0}.
\]
The phase $\phi_P$ is chosen to solve the $P$-eikonal equation
\[
p_P(t,x,\partial_t\phi_P,\nabla_x\phi_P)=0,
\]
where $\zeta_P=(\partial_t\phi_P,\nabla_x\phi_P)$. We have the
standard conjugation formula
\[
h^2e^{-i\phi/h}\square_*\big(e^{i\phi/h}a\big)
=
p_*(d\phi)a
+h\, i\mathcal T_* a
+h^2\square_*a,
\]
where
\[
\mathcal T_*a
=
-2\partial_t\phi\,\partial_ta
+2c_*^2\nabla_x\phi\cdot\nabla_xa
+
(\square_*\phi)a.
\]
Similarly,
\[
h^2e^{-i\phi/h}A^\alpha\partial_\alpha
\big(e^{i\phi/h}a\big)
=
h\, i A^\alpha(\partial_\alpha\phi)a
+h^2 A^\alpha\partial_\alpha a.
\]
Therefore, after multiplying the equation by $h^2$, the coefficient of
order $h^0$ is
\[
\begin{pmatrix}
p_P(\zeta_P)&0\\
0&p_S(\zeta_P)
\end{pmatrix}a_0^P=0.
\]
Since $p_P(\zeta_P)=0$, this equation imposes no condition on the
$P$-component $b$. On the other hand, because $p_S(\zeta_P)\neq 0$
on the $P$-characteristic set, it forces
\[
(a_0^P)_S=0.
\]
This is consistent with our choice $a_0^P=(b,0)^T$.

At order $h^1$, we obtain
\[
\begin{pmatrix}
p_P(\zeta_P)&0\\
0&p_S(\zeta_P)
\end{pmatrix}a_1^P
+
i
\begin{pmatrix}
\mathcal T_P&0\\
0&\mathcal T_S
\end{pmatrix}a_0^P
+
i\sum_{\alpha=1}^n A^\alpha \xi_{P,\alpha} a_0^P
=0.
\]
Now we substitute $a_0^P$, the $P$-component equation becomes the usual transport equation for the
principal $P$-amplitude:
\[
\mathcal T_P b
+
\sum_{\alpha=1}^n A^\alpha_{PP}\xi_{P,\alpha} b
=0.
\]
Here we used $p_P(\zeta_P)=0$, so the term involving $(a_1^P)_P$
drops out. This equation determines $b$ along the $P$-bicharacteristic.

For the $S$-component, the same substitution gives
\[
p_S(\zeta_P)(a_1^P)_S
+
i\sum_{\alpha=1}^n A^\alpha_{SP}\xi_{P,\alpha} b
=0.
\]
There is no term $\mathcal T_S b$, because the diagonal $S$-wave
transport operator acts on the $S$-component of $a_0^P$, and this component is zero. Thus the first $S$-component is determined algebraically:
\[
(a_1^P)_S
=
-i\frac{\sum_{\alpha=1}^n A^\alpha_{SP}\xi_{P,\alpha}}{p_S(\zeta_P)}
\,b.
\]
On the $P$-characteristic set, we have
\[
p_S(\zeta_P)
=
\tau_P^2-c_S^2|\xi_P|^2
=
(c_P^2-c_S^2)|\xi_P|^2.
\]
Therefore, the first off-polarization leakage term is
\[
(a_1^P)_S
=
-i\frac{\sum_{\alpha=1}^n A^\alpha_{SP}\zeta_{P,\alpha}}
{(c_P^2-c_S^2)|\xi_P|^2}\,b.
\]


\subsubsection{S-beams}
Similarly, for an $S$-beam
\[
u_h^S \sim e^{i\phi_S/h}\left(a_0^S+h a_1^S+\cdots\right),
\qquad
a_0^S=
\begin{pmatrix}
0\\d
\end{pmatrix},
\]
with
\[
\zeta_S=(\tau_S,\xi_S)
=
(\partial_t\phi_S,\nabla_x\phi_S),
\qquad
p_S(\zeta_S)=0,
\]
one gets
\[
p_P(\zeta_S)(a_1^S)_P
+
i\sum_{\alpha=1}^n A^\alpha_{PS}\xi_{S,\alpha}\, d
=
0.
\]
Thus
\[
(a_1^S)_P
=
-i
\frac{\sum_{\alpha=1}^n A^\alpha_{PS}\xi_{S,\alpha}}
{p_P(\zeta_S)}
\,d .
\]
On $\{p_S=0\}$,
\[
p_P(\zeta_S)
=
\tau_S^2-c_P^2|\xi_S|^2
=
-(c_P^2-c_S^2)|\xi_S|^2.
\]
Hence, we have
\[
(a_1^S)_P
=
i
\frac{\sum_{\alpha=1}^n A^\alpha_{PS}\xi_{S,\alpha}}
{(c_P^2 - c_S^2) |\xi_S|^2}
\,d .
\]



\subsubsection{Off-polarization leakage detection}\label{sec: off-polarization leakage detection}

We are now ready for off-polarization leakage detection. Denote
\[
H_* = c_*^2\xi \cdot \nabla_x - \frac{1}{2}|\xi|^2\nabla c^2_* \cdot \nabla_\xi
\]
the Hamiltonian vector field on $T^*\Omega$ with respect to $g_* = c^{-2}_*(x)dx^2$, for $* = P,S$. We denote $\varphi_*(x, \xi, \ell)$ the corresponding integral curve but parametrized with arc-length $\ell$, namely $\varphi_*(x, \xi, \ell)$ is the covector on the integral curve from $(x, \xi)$ such that the segment from $(x, \xi)$ to $\varphi_*(x, \xi, \ell)$ projected onto $\Omega$ has length $\ell$ with respect to $g_*$.

\begin{theorem}\label{thm: leakage detection}
    For $* = P,S$, let $x_* \in \partial \Omega$, $\xi_* \in T^*_{x_*}\Omega$ inward-pointing, $\tau_* = c_*|\xi_*|$, and $t_* \in (0, T)$ such that $(x^T_*, \xi^T_*):=\varphi_*(x_*, \xi_*, T-t_*) \in T^*\Omega^\circ$ before the first reflection.
    \begin{itemize}
        \item If $(x^T_P, \xi^T_P) \neq (x^T_S, \xi^T_S)$, then for any Gaussian beam solutions $u^P_h$ and $u^S_h$ with respect to spacetime geodesics from $(t_P, x_P, \tau_P, \xi_P)$ and $(t_S, x_S, \tau_S, \xi_S)$, respectively,
        \[
        \langle u^P_h, u^S_h \rangle = O(h^N),
        \]
        where $N$ is the construction error of the Gaussian beam.
        \item If $(x^T_P, \xi^T_P) = (x^T_S, \xi^T_S) = (x_0, \xi_0)$ and
        \[
        \sum_{\alpha=1}^n (A^\alpha_{SP} + \overline{A^\alpha_{PS}})(\xi_0)_\alpha \neq 0,
        \]
        then there exists Gaussian beam solutions $u^P_h$ and $u^S_h$ with respect to spacetime geodesics from $(t_P, x_P, \tau_P, \xi_P)$ and $(t_S, x_S, \tau_S, \xi_S)$, respectively, such that
        \[
        \langle u^P_h, u^S_h \rangle = h C + O(h^2), \quad C \neq 0.
        \]
    \end{itemize}
\end{theorem}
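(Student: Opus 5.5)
The plan is to write the pairing on the time $T$ slice as a single oscillatory integral with complex phase and evaluate it by (non-)stationary phase; the only input from the beam construction is the explicit form of $a_0^*$ and $a_1^*$ obtained above. Omitting cutoffs,
\[
\langle u^P_h,u^S_h\rangle=h^{-n/2}\int_\Omega e^{i\Phi(x)/h}\,\big(\overline{a^P}\cdot a^S\big)(T,x)\,dx,\qquad \Phi(x):=\phi_S(T,x)-\overline{\phi_P(T,x)}.
\]
Here
\[
\operatorname{Im}\Phi=\operatorname{Im}\phi_S+\operatorname{Im}\phi_P\ \ge\ c\big(|x-x^T_S|^2+|x-x^T_P|^2\big),
\]
and $\nabla_x\operatorname{Re}\Phi=\xi^T_S-\xi^T_P$ at a point lying on both central rays. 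The Gaussian beams are approximate solutions with error $O(h^N)$ in energy norm, so replacing exact solutions by beams (or truncating the amplitude series) only costs $O(h^N)$.

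\textbf{First bullet.} Suppose first that $x^T_P\neq x^T_S$. Then $\operatorname{Im}\Phi\ge c'>0$ on the joint support of the cutoffs, after shrinking the tubular neighbourhoods, so the integrand is $O(e^{-c'/h})$. Suppose instead that $x^T_P=x^T_S$ but $\xi^T_P\neq\xi^T_S$. Then $d\Phi(x_0)=\xi^T_S-\xi^T_P\neq 0$ is real and nonzero at the unique point where $\operatorname{Im}\Phi$ vanishes. The standard non-stationary phase lemma for complex phases with $\operatorname{Im}\Phi\ge0$ (repeated integration by parts with $L=\frac{h}{i}\frac{\overline{\nabla\Phi}\cdot\nabla}{|\nabla\Phi|^2}$, using $|\nabla\Phi|^2\gtrsim |\nabla\operatorname{Re}\Phi|^2+\operatorname{Im}\Phi$ near $x_0$ and $\operatorname{Im}\Phi>0$ elsewhere) gives $O(h^\infty)$. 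In either case the total is $O(h^N)$, limited only by the construction error.

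\textbf{Second bullet.} Now suppose $(x^T_P,\xi^T_P)=(x^T_S,\xi^T_S)=(x_0,\xi_0)$. Then $x_0$ is a nondegenerate critical point of $\Phi$ with $\operatorname{Im}\Phi(x_0)=0$ and $\operatorname{Im}\operatorname{Hess}\Phi(x_0)>0$. Expand the amplitude as
\[
\overline{a^P}\cdot a^S=\overline{a_0^P}\cdot a_0^S+h\big(\overline{a_0^P}\cdot a_1^S+\overline{a_1^P}\cdot a_0^S\big)+O(h^2).
\]
The key observation is that $\overline{a_0^P}\cdot a_0^S=\overline{(b,0)}\cdot(0,d)\equiv0$ identically in $x$, not merely at $x_0$. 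Hence the whole stationary phase expansion of $L_{00}$ vanishes, including its second-order term. Complex stationary phase (Hörmander, Thm.~7.7.5) together with the $h^{-n/2}$ normalisation then gives
\[
\langle u^P_h,u^S_h\rangle=h\,e^{i\Phi(x_0)/h}\,\frac{(2\pi)^{n/2}}{\det(-i\operatorname{Hess}\Phi(x_0))^{1/2}}\,L_{01}(x_0)+O(h^2).
\]
Using the formulas for $(a_1^P)_S$ and $(a_1^S)_P$ computed above with $\xi_P=\xi_S=\xi_0$ real, we get
\[
L_{01}(x_0)=\bar b\,(a_1^S)_P+\overline{(a_1^P)_S}\,d
=\frac{i\,\bar b\,d}{(c_P^2-c_S^2)|\xi_0|^2}\sum_{\alpha=1}^n\big(A^\alpha_{PS}+\overline{A^\alpha_{SP}}\big)(\xi_0)_\alpha .
\]
This is $\bar b d$ times a nonzero multiple of the complex conjugate of the quantity assumed nonzero, using $c_P>c_S$. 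We choose nonzero initial data for $b$ and $d$ at $(t_P,x_P)$ and $(t_S,x_S)$. Since $b$ and $d$ solve linear first-order transport ODEs along the rays (with no reflection before time $T$), they remain nonzero at $(T,x_0)$, so $C\neq0$.

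\textbf{Main obstacle.} The delicate point is the unimodular factor $e^{i\operatorname{Re}\Phi(x_0)/h}$, which oscillates in $h$. I would remove it by fixing the free additive constants in the phases so that $\operatorname{Re}\phi_S(T,x_0)=\operatorname{Re}\phi_P(T,x_0)$, which is legitimate because adding a real constant to $\phi_*$ preserves the eikonal equation. Beyond that, one must check two things: the beam construction error at order $N\ge2$ really contributes $O(h^2)$ to the pairing, and the cutoff $\chi$ does not affect the expansion. Both are routine given the $L^2$ normalisation $h^{-n/4}$.
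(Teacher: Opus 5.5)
Your proposal is correct and follows essentially the same route as the paper. The leading polarizations are orthogonal identically, so complex stationary phase at the collision point keeps only the principal--subprincipal term built from the algebraic leakage components $(a_1^P)_S$ and $(a_1^S)_P$, while a position or covector mismatch gives $O(h^N)$. Your extra care only fills in details the paper leaves implicit: the conjugate-linear-in-the-first-slot pairing merely conjugates the paper's $\kappa$, you treat $\xi^T_P\neq\xi^T_S$ explicitly by non-stationary phase, and you normalize the phase constants to remove the oscillating unimodular factor $e^{i\Phi(x_0)/h}$.
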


\begin{remark}
    An example where the assumption
    \[
        \sum_{\alpha=1}^n (A^\alpha_{SP} + \overline{A^\alpha_{PS}})(\xi_0)_\alpha \neq 0
    \]
    for all $x_0$ and almost every $\xi_0 \in T^*_{x_0}\Omega$ is simply $A_{PS} = A_{SP} = 1$. In fact, as long as
    \[
    (A^1_{SP} + \overline{A^1_{PS}},\, \cdots,\, A^n_{SP} + \overline{A^n_{PS}} )(x_0) \neq 0,
    \]
    then the non-vanishing requirement holds for almost every $\xi_0 \in T^*_{x_0}\Omega$.
\end{remark}

\begin{proof}
For $* = P, S$, denote $(t, x_*(t), \tau_*(t), \xi_*(t))$ the corresponding co-geodesic flow at time slice $t \geq t_*$. Then $(x_*(t_*), \xi_*(t_*)) = (x_*, \xi_*)$ and $(x_*(T), \xi_*(T)) = (x^T_*, \xi^T_*)$. Write
\[
u_h^P(T,x)
\sim
h^{-n/4}e^{i\phi_P(T,x)/h}
\left[
\begin{pmatrix}
b\\0
\end{pmatrix}
+
h
\begin{pmatrix}
*\\c
\end{pmatrix}
+\cdots
\right],
\]
\[
u_h^S(T,x)
\sim
h^{-n/4}e^{i\phi_S(T,x)/h}
\left[
\begin{pmatrix}
0\\d
\end{pmatrix}
+
h
\begin{pmatrix}
r\\ *
\end{pmatrix}
+\cdots
\right],
\]
where
\[
c=(a_1^P)_S,
\quad r=(a_1^S)_P.
\]
The $h^{-n/4}$ is for normalization. Then
\[
\begin{aligned}
\langle u_h^P(T),u_h^S(T)\rangle_{L^2}
&=
h^{-n/2}
\int e^{i(\phi_P-\overline{\phi_S})/h}
\left[
\begin{pmatrix}
b\\0
\end{pmatrix}
+
h
\begin{pmatrix}
*\\c
\end{pmatrix}
\right]
\cdot
\overline{
\left[
\begin{pmatrix}
0\\d
\end{pmatrix}
+
h
\begin{pmatrix}
r\\ *
\end{pmatrix}
\right]}
+O(h^2)\,dx.
\end{aligned}
\]
The leading polarizations are orthogonal:
\[
\begin{pmatrix}
b\\0
\end{pmatrix}
\cdot
\overline{
\begin{pmatrix}
0\\d
\end{pmatrix}}
=0.
\]
Therefore the actual leading term is
\[
h^{-n/2+1}
\int e^{i(\phi_P-\overline{\phi_S})/h}
\left(c\overline d+b\overline r\right)
\,dx.
\]
If $x_P^T \neq x_S^T$, then the cutoff in the Gaussian beam construction \eqref{eq: gaussian beam} will force the inner product of two Gaussian beam solutions to be 0 up to construction error $O(h^N)$. If $x_P^T = x_S^T = x_0$, then stationary phase may only occur at $x_0$ if
\[
\nabla (\phi_P - \overline{\phi_S})(x_0) = \xi_P - \xi_S
\]
is 0, namely $\xi_P = \xi_S$. Hence if $(x^T_P, \xi^T_P) \neq (x^T_S, \xi^T_S)$, then
\[
\langle u^P_h, u^S_h \rangle = O(h^N).
\]

Suppose now the $P$/$S$-beams intersect co-linearly at time $T$. Namely, they satisfy $(x^T_P, \xi^T_P) = (x^T_S, \xi^T_S) = (x_0, \xi_0)$, so that stationary phase occurs at $x_0$.
The relevant leakage amplitude is
\[
\kappa=c\overline d+b\overline r .
\]
Recall the formulas for $c$ and $r$:
\[
c
=
-i
\frac{A_{SP}(\xi_P)}
{(c_P^2-c_S^2)|\xi_P|^2}
\,b,
\qquad
r
=
i
\frac{A_{PS}(\xi_S)}
{(c_P^2-c_S^2)|\xi_S|^2}
\,d,
\]
where
\[
A_{SP}(\xi_P)
=
\sum_{\alpha=1}^n A^\alpha_{SP}\xi_{P,\alpha},
\qquad
A_{PS}(\xi_S)
=
\sum_{\alpha=1}^n A^\alpha_{PS}\xi_{S,\alpha}.
\]
Substitute in, we obtain
\[
\begin{split}
    \kappa(x_0) &=
    -\frac{i}{(c_P^2-c_S^2)|\xi_0|^2}
    A_{SP}(\xi_0)\,b\overline d
    +
    b\,
    \overline{
    \frac{i}{(c_P^2-c_S^2)|\xi_0|^2}
    A_{PS}(\xi_0)\,d
    }\\
    &= -\frac{i}{(c_P^2-c_S^2)|\xi_0|^2}(A_{SP}+ \overline{A_{PS}})(\xi_0)b\bar{d}.
\end{split}
\]
This term is nonzero if and only if
\[
(A_{SP} + \overline{A_{PS}})(\xi_0) \neq 0.
\]

For the specific stationary phase term, near $x_0$, write
\[
\Phi(x)
=
\Phi(x_0)
+
\frac12 (x-x_0)^T Q (x-x_0)
+
O(|x-x_0|^3),
\]
where $\Phi = \phi^P - \overline{\phi^S}$ and
\[
Q=
\partial_x^2\phi_P(T,x_0)
-
\overline{\partial_x^2\phi_S(T,x_0)}.
\]
By the construction of Gaussian beam solutions, both beams have positive imaginary quadratic parts,
\[
\operatorname{Im}\phi_P(T,x)\geq c|x-x_0|^2,
\qquad
\operatorname{Im}\phi_S(T,x)\geq c|x-x_0|^2,
\]
thus we have
\[
\operatorname{Im}\Phi(x)\geq c|x-x_0|^2.
\]
Therefore $Q$ is nondegenerate in the complex stationary phase sense. Consequently,
\[
h^{-n/2}
\int e^{i\Phi(x)/h}K(x)\,dx
\sim
e^{i\Phi(x_0)/h}
(2\pi)^{n/2}
\frac{\kappa(x_0)}
{\det(-iQ)^{1/2}}
+\cdots .
\]
Thus
\[
\langle u_h^P(T),u_h^S(T)\rangle_{L^2}
=
h\,
e^{i\Phi(x_0)/h}
(2\pi)^{n/2}
\frac{\kappa(x_0)}
{\det(-iQ)^{1/2}}
+O(h^2),
\]
has nonzero coefficient when
\[
    \sum_{\alpha=1}^n (A^\alpha_{SP} + \overline{A^\alpha_{PS}})(\xi_0)_\alpha \neq 0.
\]

\end{proof}

\begin{remark}\label{rem: spatial scan}
    Note that the full spacetime covectors are still different:
    \[
    \zeta_P=(\tau_P,\xi_0),
    \qquad
    \zeta_S=(\tau_S,\xi_0),
    \]
    with
    \[
    \tau_P=\pm c_P|\xi_0|,
    \qquad
    \tau_S=\pm c_S|\xi_0|.
    \]
    Thus $\zeta_P\neq \zeta_S$, but their spatial projections agree. As a result, if one uses spacetime inner product, there will be no stationary phase. This is the reason why having spatial $L^2$ inner product is so important here.
\end{remark}

We conclude here the specific geometric information recovered relating the two metrics. Specifically, suppose we detect that
\[
\langle u^P_h, u^S_h \rangle = Ch + O(h^2)
\]
for some $C \neq 0$, where $u^P_h, u^S_h$ are some Gaussian beam solutions from $(t_P, x_P, \tau_P, \xi_P)$ and $(t_S, x_S, \tau_S, \xi_S)$, respectively. Then by Theorem \ref{thm: leakage detection}, we know
for some unknown interior covector $(x_0, \xi_0)$,
\[
(x^T_P, \xi^T_P) = (x^T_S, \xi^T_S) = (x_0, \xi_0)
\]
for some unknown $(x_0, \xi_0)$. This means that
\begin{enumerate}
    \item \emph{the $P$-Hamiltonian flow from $(x_P, \xi_P)$ and the $S$-Hamiltonian flow from $(x_S, \xi_S)$ collide at $(x_0, \xi_0)$};
    \item \emph{the length of the corresponding $P$- and $S$-geodesics are $T-t_P$ and $T-t_S$, respectively;}
    \item \emph{the $*$-length of the covector $\xi_0$ is the same as the $*$-length of the initial covector $\xi_*$, for $* = P, S$, in particular the metric ratio can be read from initial data:
    \[
    \frac{c_P^2(x_0)}{c_S^2(x_0)} = \frac{c_P^2(x_P)|\xi_P|^2}{c_S^2(x_S)|\xi_S|^2}
    \]
    where $|\xi_0|$ stands for the usual Euclidean length.
    }
\end{enumerate}

\subsection{General $k$-velocity systems}\label{sec: general k-velocity systems}
Consider now the $k$-velocity system \eqref{eq: k wave equation}. Note that the polarization sets for the $j$-th wave is still in the simple form $e_j = (0, \cdots, 0, 1, 0, \cdots, 0)$. In particular, if we shoot out a $P_i$-beam and a $P_j$-beam, then their corresponding leading polarizations are still $b_ie_i$ and $b_je_j$ respectively, hence the top order interaction still occurs between the leading amplitude of one beam and subprincipal amplitude of the other. The proof is almost identical, with the relevant leakage amplitude being
\[
\kappa_{ij} = b_i\overline{(a_1^{P_i})_j} + (a_1^{P_j})_i\overline{b_j}
\]
where $a_1^{P_i}, a_1^{P_j}$ are the subprincipal level amplitudes. We refer to the proof of Theorem \ref{thm: leakage detection} for details.
\begin{theorem}\label{thm: k leakage detection}
    For $* = i,j$, let $x_* \in \partial \Omega$, $\xi_* \in T^*_{x_*}\Omega$ inward-pointing, $\tau_* = c_*|\xi_*|$, and $t_* \in (0, T)$ such that $(x^T_*, \xi^T_*):=\varphi_*(x_*, \xi_*, T-t_*) \in T^*\Omega^\circ$ before the first reflection.
    \begin{itemize}
        \item If $(x^T_i, \xi^T_i) \neq (x^T_j, \xi^T_j)$, then for any Gaussian beam solutions $u^i_h$ and $u^j_h$ with respect to spacetime geodesics from $(t_i, x_i, \tau_i, \xi_i)$ and $(t_j, x_j, \tau_j, \xi_j)$, respectively,
        \[
        \langle u^i_h, u^j_h \rangle = O(h^N),
        \]
        where $N$ is the construction error of the Gaussian beam.
        \item If $(x^T_i, \xi^T_i) = (x^T_j, \xi^T_j) = (x_0, \xi_0)$ and
        \[
        \sum_{\alpha=1}^n (A^\alpha_{ji} + \overline{A^\alpha_{ij}})(\xi_0)_\alpha \neq 0,
        \]
        then there exists Gaussian beam solutions $u^i_h$ and $u^j_h$ with respect to spacetime geodesics from $(t_i, x_i, \tau_i, \xi_i)$ and $(t_j, x_j, \tau_j, \xi_j)$, respectively, such that
        \[
        \langle u^i_h, u^j_h \rangle = h C + O(h^2), \quad C \neq 0.
        \]
    \end{itemize}
\end{theorem}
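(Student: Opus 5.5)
The plan is to repeat the proof of Theorem \ref{thm: leakage detection} with the pair $(P,S)$ replaced by $(P_i,P_j)$. Without loss of generality take $i<j$, so $c_i>c_j$; the case $i>j$ is symmetric and only flips signs. The proof has three steps: compute the first off-polarization leakage, dispose of the non-colliding case, and evaluate the stationary phase at a collision.

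\textbf{Step 1: leakage of each beam.} Construct $u^i_h$ and $u^j_h$ as in Section \ref{sec: beam solutions}, with leading amplitudes $a_0^{P_i}=b_ie_i$ and $a_0^{P_j}=b_je_j$. The order $h^0$ equation $\mathrm{diag}(p_1,\dots,p_k)(d\phi_i)a_0^{P_i}=0$ forces all components other than the $i$-th to vanish, since $p_m(\zeta_i)=(c_i^2-c_m^2)|\xi_i|^2\neq 0$ for $m\neq i$. At order $h$, the $i$-th component of the equation is the transport equation for $b_i$. The $m$-th component, for $m\neq i$, is the algebraic relation
\[
(a_1^{P_i})_m=-i\frac{\sum_\alpha A^\alpha_{mi}\xi_{i,\alpha}}{(c_i^2-c_m^2)|\xi_i|^2}\,b_i .
\]
In particular the two quantities we need are
\[
(a_1^{P_i})_j=-i\frac{A_{ji}(\xi_i)}{(c_i^2-c_j^2)|\xi_i|^2}b_i,
\qquad
(a_1^{P_j})_i=-i\frac{A_{ij}(\xi_j)}{(c_j^2-c_i^2)|\xi_j|^2}b_j=i\frac{A_{ij}(\xi_j)}{(c_i^2-c_j^2)|\xi_j|^2}b_j .
\]

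\textbf{Step 2: the non-colliding case.} Expand the inner product at $t=T$. The principal term $b_ie_i\cdot\overline{b_je_j}$ vanishes identically, so the integrand starts at order $h$ with amplitude $\kappa_{ij}$ and phase $\Phi=\phi_i-\overline{\phi_j}$. If $x_i^T\neq x_j^T$, shrink the cutoffs $\chi$ so that their supports near $t=T$ are disjoint; the only remaining contribution is then the construction error $O(h^N)$. If $x_i^T=x_j^T$ but $\xi_i^T\neq\xi_j^T$, then $\operatorname{Im}\Phi\ge c|x-x_0|^2$ while $\nabla\operatorname{Re}\Phi(x_0)=\xi_i^T-\xi_j^T\neq0$. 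Repeated integration by parts, or the standard non-stationary complex phase lemma, then gives $O(h^\infty)$, and in particular $O(h^N)$. These are exactly the arguments used for Theorem \ref{thm: leakage detection}.

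\textbf{Step 3: the colliding case.} At a collision $\xi_i=\xi_j=\xi_0$. Compute
\[
\kappa_{ij}(x_0)=b_i\overline{(a_1^{P_i})_j}\cdots
\]
taking care to conjugate correctly. With $\langle u,v\rangle=\int u\cdot\bar v$, the order $h$ amplitude is
\[
(a_1^{P_i})_j\overline{b_j}+b_i\overline{(a_1^{P_j})_i}=-\frac{i}{(c_i^2-c_j^2)|\xi_0|^2}\bigl(A_{ji}+\overline{A_{ij}}\bigr)(\xi_0)\,b_i\overline{b_j},
\]
which matches the hypothesis of the theorem. Choose initial data with $b_i,b_j\neq0$; this is possible because the transport equations preserve nonvanishing. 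Then $\kappa_{ij}(x_0)\neq0$. Complex stationary phase with $Q=\partial_x^2\phi_i-\overline{\partial_x^2\phi_j}$, where $\operatorname{Im}Q>0$, absorbs the $h^{-n/2}$ factor and yields $hC+O(h^2)$ with $C=e^{i\Phi(x_0)/h}(2\pi)^{n/2}\kappa_{ij}(x_0)\det(-iQ)^{-1/2}\neq0$.

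The main obstacle is verifying that the $O(h^2)$ remainder is genuinely smaller than the order $h$ term. The $h^0$-amplitude product vanishes identically in $x$, not merely at $x_0$: it is $b_ie_i\cdot\overline{b_je_j}\equiv0$ because the polarizations are constant. Its stationary phase corrections therefore also vanish, unlike the Lamé situation. Likewise the other $m$-th components, $m\neq i,j$, pair with zero leading entries. So the only order $h$ contribution is $\kappa_{ij}$, and everything else is $O(h^2)$ once the $h^{1/2}$ gains from Taylor expansion of $\kappa_{ij}$ are accounted for; odd terms integrate to zero against the Gaussian.
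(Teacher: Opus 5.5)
Your proposal is correct and is essentially the paper's own proof. The paper notes that the polarizations $b_ie_i$, $b_je_j$ are still constant and defers to the proof of Theorem~\ref{thm: leakage detection}; your Steps 1--3 carry out exactly that argument, and your $\kappa_{ij}=(a_1^{P_i})_j\overline{b_j}+b_i\overline{(a_1^{P_j})_i}$ is the correctly paired leakage amplitude (the paper's displayed $\kappa_{ij}$ has an index slip). The one place to phrase more carefully is Step 2, because the first bullet concerns arbitrary beams, so instead of shrinking the cutoffs note that on the common support $\operatorname{Im}\Phi\ge c(|x-x_i^T|^2+|x-x_j^T|^2)\ge \frac{c}{2}|x_i^T-x_j^T|^2$, which already gives $O(h^\infty)$.
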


\section{Directional collision rigidity problem}\label{sec: directional collision rigidity problem}

We first define the geometric information related to a pair of metrics $(g_P, g_S)$.
\begin{definition}\label{def: directional collision relation}
    Let $(x_P, \xi_P), (x_S, \xi_S) \in \partial T^*\Omega$. For $* = P,S$, denote $H_*$ the Hamiltonian vector field of $g_*^{-1}(\xi, \xi)$, and $\varphi_*(x, \xi, \ell)$ the corresponding Hamiltonian flow parametrized by $\ell$ the length of projection with respect to $g_*$. We call $(x_P, \xi_P, \ell_P; x_S, \xi_S, \ell_S)$ a \emph{directional collision relation} if
    \[
    \varphi^P(x_P, \xi_P, \ell_P) = \varphi^S(x_S, \xi_S, \ell_S).
    \]
\end{definition}
Note that the relation is scaling invariant for covectors, namely
\[
(x_P, \xi_P, \ell_P; x_S, \xi_S, \ell_S)
\]
is a directional collision relation if and only if
\[
(x_P,\lambda \xi_P, \ell_P; x_S, \lambda \xi_S, \ell_S)
\]
is a directional collision relation for $\lambda > 0$. A natural question is the following rigidity problem.

\begin{problem}[directional collision rigidity problem]\label{problem: rigidity problem}
    Suppose $\Omega$ is a smooth manifold with boundary, and $(g_P, g_S)$ is a pair of metrics on $\Omega$. Moreover, suppose the metrics are given on $\partial T^* \Omega$. Does the directional collision relation $R$ uniquely determine the pair of metrics up to a common isometry?
\end{problem}

An immediate result is the following.
\begin{theorem}\label{thm: 1 metric known}
    Suppose the boundary is strictly convex with respect to both metrics and the geodesics are all non-trapping. If $g_P$ is a priori known and $g^{-1}_S|_{\partial T^* \Omega}$ is known, then $g_S$ is determined by the directional collision relation $R$.
\end{theorem}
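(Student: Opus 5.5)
The plan is to reduce the statement to a pointwise reconstruction of the dual metric $g_S^{-1}$ on every interior covector, using the known metric $g_P$ as a probe. The directional collision relation $R$ will then tell us where each $S$-geodesic issued from the boundary ends up in $T^*\Omega$. Two facts will be used throughout.
\begin{enumerate}
\item The Hamiltonian flow $\varphi_S$ conserves $g_S^{-1}(\xi,\xi)$.
\item Every interior covector lies on an $S$-geodesic and on a $P$-geodesic that reach $\partial\Omega$ when traced backward. This follows from non-trapping and strict convexity.
\end{enumerate}

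\textbf{Step 1: recovering the endpoint map of $\varphi_S$ from $R$.} Fix $(x_S,\xi_S)\in\partial T^*\Omega$ inward pointing and $\ell_S>0$ small enough that $\varphi_S(x_S,\xi_S,\ell_S)=:(x_0,\xi_0)$ lies in $T^*\Omega^\circ$ before the first reflection. Since $g_P$ is known, we can flow $(x_0,\xi_0)$ backward along $H_P$. By non-trapping and strict convexity of $\partial\Omega$ for $g_P$, this backward flow hits the boundary transversally at a unique first point $(x_P,\xi_P)$ after some $g_P$-length $\ell_P$. Hence $(x_P,\xi_P,\ell_P;x_S,\xi_S,\ell_S)\in R$, so the fiber
\[
R(x_S,\xi_S,\ell_S):=\{(x_P,\xi_P,\ell_P):(x_P,\xi_P,\ell_P;x_S,\xi_S,\ell_S)\in R\}
\]
is nonempty. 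Conversely, every element of this fiber satisfies $\varphi_P(x_P,\xi_P,\ell_P)=(x_0,\xi_0)$ by definition. Since $g_P$ is known, we may pick any element of $R(x_S,\xi_S,\ell_S)$, compute $\varphi_P(x_P,\xi_P,\ell_P)$, and thereby read off $(x_0,\xi_0)$. Thus $R$ together with $g_P$ determines the map
\[
(x_S,\xi_S,\ell_S)\mapsto \varphi_S(x_S,\xi_S,\ell_S)\in T^*\Omega^\circ
\]
on its natural domain.

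\textbf{Step 2: reading off $g_S^{-1}$ at interior covectors.} By conservation of the Hamiltonian,
\[
g_S^{-1}(x_0)(\xi_0,\xi_0)=g_S^{-1}(x_S)(\xi_S,\xi_S),
\]
and the right-hand side is known because $g_S^{-1}|_{\partial T^*\Omega}$ is given. Now let $(x_0,\xi_0)\in T^*\Omega^\circ\setminus 0$ be arbitrary. Non-trapping and strict convexity for $g_S$ show that its backward $H_S$-trajectory reaches $\partial\Omega$ transversally at some $(x_S,\xi_S)$ after a $g_S$-length $\ell_S$. Hence $(x_0,\xi_0)$ lies in the image of the map from Step 1, and the value $g_S^{-1}(x_0)(\xi_0,\xi_0)$ is determined. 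Since the quadratic form $g_S^{-1}(x_0)$ is known on all covectors at $x_0$, polarization recovers it, and hence $g_S$. This is consistent with item (3) after Theorem~\ref{thm: leakage detection}.

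\textbf{Main obstacle.} The delicate point is Step 1: the recovery must be independent of choices, and no spurious elements may appear in the fiber $R(x_S,\xi_S,\ell_S)$. If we only know $R$ as a subset, we cannot a priori match a given $S$-datum with a $P$-datum unless some collision occurs. This matching is exactly what the backward $g_P$-flow guarantees, and it is the one place where non-trapping and convexity for $g_P$ are essential.

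I would also check the following point. Within $R$, $\ell_S$ is only used in the range before the first reflection, so the boundary data genuinely parametrize interior covectors without reflections. Moreover, different elements of a fiber give the same $(x_0,\xi_0)$ trivially, by the defining equation of $R$. Finally, the scaling invariance of $R$ in $\xi$ is harmless. Because $g_S^{-1}$ is quadratic in $\xi$, scaling is compatible with Step 2.
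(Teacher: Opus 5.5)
Your proposal is correct and follows essentially the paper's argument. The paper starts from an arbitrary interior covector $(x_0,\xi_0)$, backtracks along the known $g_P$-flow to $(x_P,\xi_P,\ell_P)$, uses $R$ to get the matching $(x_S,\xi_S,\ell_S)$, and concludes by conservation of $g_S^{-1}(\xi,\xi)$ together with the known boundary values. You run the same steps in reverse order (first recovering $\varphi_S$ from $S$-data through $R$ and the known $g_P$-flow, then using surjectivity), which is only a cosmetic difference, and your explicit polarization step makes precise what the paper leaves implicit.
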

\begin{proof}
    For any point and direction $(x_0, \xi_0)$, back track via $g_P$ to find the incoming direction $(x_P, \xi_P)$ and with $P$-length $\ell_P$. Denote $(x_S, \xi_S, \ell_S) = R(x_P, \xi_P, \ell_P)$, then
    \[
    g^{-1}_S(x_0)(\xi_0, \xi_0) = g_S^{-1}(x_S)(\xi_S, \xi_S).
    \]
\end{proof}

The interesting problem is whether the relation itself determines both metrics. Here we prove a simple version.
\begin{theorem}\label{thm: k collision rigidity}
    Suppose $k(k-1)/2 \geq n-1 \geq 1$, $\Omega \subset \mathbb{R}^n$, for $j = 1, \cdots, k$, $g_j = c_j^{-2}(x) dx^2$, $h_j = d_j^{-2}(x)dx^2$ conformal to Euclidean. Assume for any $i \neq j$, $c_i \neq c_j$, $d_i \neq d_j$, and for almost every $x \in \Omega$,
    \[
    \dim\mathrm{span}\{\nabla (d_i^2/d_j^2)(x): i \neq j\} \geq n-1.
    \]
    We assume the boundary is strictly convex to all metrics and all the geodesics are non-trapping. Then $R^g_{ij} = R^h_{ij}$ and $g^{-1}_j|_{\partial T^*\Omega} = h^{-1}_j|_{\partial T^*\Omega}$ for all $i, j$ implies $g_j = h_j$ on $\Omega$ for all $j$.
\end{theorem}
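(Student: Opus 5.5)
The plan is to build a boundary-fixing diffeomorphism $F:\Omega\to\Omega$ out of the common collision data, show that $F$ pulls each $h_j$ back to $g_j$, and then use the conformal-Euclidean structure to force $F=\mathrm{id}$.

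\emph{Step 1: read off speed ratios.} For a collision tuple $(x_i,\xi_i,\ell_i;x_j,\xi_j,\ell_j)\in R_{ij}$, item 3 after Theorem \ref{thm: leakage detection} gives
\[
\frac{c_i^2(x_0)}{c_j^2(x_0)}=\frac{c_i^2(x_i)|\xi_i|^2}{c_j^2(x_j)|\xi_j|^2},
\]
where $x_0$ is the $g$-collision point. The right-hand side involves only boundary values of the metrics, which coincide for $g$ and $h$. Hence if $y_0$ is the $h$-collision point determined by the same tuple (it lies in $R^h_{ij}=R^g_{ij}$), then
\[
\frac{c_i^2}{c_j^2}(x_0)=\frac{d_i^2}{d_j^2}(y_0).
\]

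\emph{Step 2: define $F$.} Fix any interior covector $(x_0,\xi_0)$. Use non-trapping and strict convexity to backtrack along $g_i$ to a boundary datum $(x_i,\xi_i,\ell_i)$. Set $F(x_0,\xi_0)$ to be $\varphi^h_i(x_i,\xi_i,\ell_i)$, the endpoint of the $h_i$-ray with the same boundary data and length.

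The key claim is that this does not depend on the choice of $i$. To see this, backtrack $(x_0,\xi_0)$ along $g_j$ as well. The resulting pair of boundary data is a $g$-collision, so it lies in $R^g_{ij}=R^h_{ij}$. Therefore the $h_i$-ray and the $h_j$-ray issued from these boundary data land on the same covector.

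It follows that $F$ maps $g_i$-cogeodesic flow to $h_i$-cogeodesic flow for every $i$, preserving arclength. Taking short rays near $\partial\Omega$ shows that $F$ fixes the boundary. Projecting to the base gives a map $f:\Omega\to\Omega$ whose differential intertwines the unit cosphere bundles of $g_i$ and $h_i$. Continuity and smoothness of $f$ follow because $F$ is a composition of smooth flows away from glancing, which is excluded by strict convexity.

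\emph{Step 3 (main obstacle): show $f$ is a diffeomorphism with $f^*h_i=g_i$, and not merely a fibrewise map on covectors.} The first task is to show that $F(x_0,\xi_0)$ has base point independent of $\xi_0$. Here I would use the ratio identity from Step 1:
\[
\frac{d_i^2}{d_j^2}\circ f=\frac{c_i^2}{c_j^2}
\qquad\text{for all } i\neq j.
\]
Varying $\xi_0$ at fixed $x_0$ therefore moves the image only inside a common level set of all the $d_i^2/d_j^2$. By the span hypothesis, at almost every point these level sets meet in a set of dimension at most one. What remains is to rule out motion along that residual curve. I would try to do this by comparing two $h_i$-rays that share a boundary footprint but differ in initial direction, and exploiting the fact that their $g$-counterparts meet at a single point. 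I expect this to be the delicate part.

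Once $f$ is well defined, length-preservation along rays in every direction gives $f^*h_i=g_i$. Injectivity follows by applying the same construction with $g$ and $h$ interchanged.

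\emph{Step 4: conclude.} Since $g_i$ and $h_i$ are both conformal to the Euclidean metric, $f$ is a conformal self-map of $\Omega\subset\mathbb R^n$ that equals the identity on $\partial\Omega$. If $n\ge3$, Liouville's theorem makes $f$ a Möbius map, and a Möbius map fixing an open piece of a hypersurface pointwise is the identity. If $n=2$, $f$ is holomorphic or antiholomorphic and equals the identity on the boundary curve, hence $f=\mathrm{id}$.

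Then $f^*h_j=g_j$ reads $d_j^{-2}=c_j^{-2}$, that is, $g_j=h_j$ for all $j$. The hypothesis $k(k-1)/2\ge n-1$ is precisely what allows the span condition to hold, since it guarantees there are enough pairs $(i,j)$.
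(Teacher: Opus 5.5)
\textbf{The gap is Step 3.} It is the only place where the span hypothesis and $k(k-1)/2\ge n-1$ are used, so the substance of the theorem is exactly what you leave open. Write $F(x_0,\xi_0)=(y,\eta)$. Differentiating your level-set identity $\frac{d_i^2}{d_j^2}(y(x_0,\xi_0))=\frac{c_i^2}{c_j^2}(x_0)$ in $\xi_0$ gives $\nabla(d_i^2/d_j^2)(y)\cdot d_\xi y=0$. That is, the gradients at the \emph{image} point annihilate the \emph{range} of $d_\xi y$. Homogeneity gives $d_\xi y[\xi_0]=0$, but that is a statement about the \emph{kernel}, so it does not remove the remaining line in the range. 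You therefore only get $\operatorname{rank} d_\xi y\le 1$. ``Comparing two rays with the same boundary footprint'' is not an argument that eliminates this last direction.

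A smaller point: obtaining injectivity ``by interchanging $g$ and $h$'' would require the span condition for the ratios $c_i^2/c_j^2$, which is not assumed. Bijectivity of $F$ suffices once $F$ is known to be a cotangent lift.

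\textbf{How the paper closes it.} The paper differentiates the conjugacy itself, not just the ratio identity. Take $\Phi=\varphi^g\circ(\varphi^h)^{-1}=(X,\Xi)$, which is the inverse of your $F$. Use the arclength generators $Y_j=c_j\frac{\xi}{|\xi|}\cdot\nabla_x-|\xi|\nabla c_j\cdot\nabla_\xi$, and $Z_j$ defined the same way with $d_j$. The base component of $d\Phi\,Z_j=Y_j\circ\Phi$, divided by $c_j(X)$, reads
\[
\frac{d_j(x)}{c_j(X)}\,d_xX\left(\frac{\xi}{|\xi|}\right)-|\xi|\,d_\xi X\left(\frac{\nabla d_j(x)}{c_j(X)}\right)=\frac{\Xi}{|\Xi|}.
\]
Subtract the $i$-version and use your ratio identity $d_j(x)/c_j(X)=d_i(x)/c_i(X)$. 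This kills the $d_xX$ term and leaves $d_\xi X[\nabla(d_i^2/d_j^2)(x)]=0$, which is a constraint on the \emph{kernel} at the source point. Combined with $d_\xi X[\xi]=0$ and the span hypothesis, this gives $d_\xi X=0$ for generic $(x,\xi)$, hence everywhere.

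The direction matters here. The same computation applied to your $F$ produces gradients of $c_i^2/c_j^2$, which are not assumed to span.

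\textbf{Alternative fix within your route.} You can also finish your own argument by adding one more constraint on the range. The arclength flows are generated by the degree-one Hamiltonians $c_j|\xi|$ and $d_j|\xi|$, and these agree on the boundary. Hence $F$ preserves $\xi\,dx$, which gives $\eta\cdot d_\xi y=0$. For almost every $(x_0,\xi_0)$, the covector $\eta$ together with the $\nabla(d_i^2/d_j^2)(y)$ spans: the bad set in the target is null and $F$ is a diffeomorphism. This forces $d_\xi y=0$.

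The same $1$-form invariance is what the paper uses to identify $\Phi$ with the cotangent lift of $X$. Your length-preservation argument for $f^*h_i=g_i$ is fine once fiberwise constancy is established.
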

\begin{proof}
    Consider $\varphi^g_*$ the Hamiltonian flow with respect to $g_*$, and $\varphi^h_*$ the Hamiltonian flow with respect to $h_*$, parametrized by the length of the projection (we can also restrict to the cosphere bundle). Both are diffeomorphisms from subset of $\partial_{-,\bullet}T^* \Omega \times (0, \infty)$ to $T^* \Omega^\circ \backslash 0$ where $\partial_{-,\bullet} T^*\Omega$ is the set of inward pointing covectors on the boundary with respect to metric $\bullet = g_*,h_*$. Note that $R^g$ and $R^h$ maps in between these metric dependent inward pointing covectors. In fact, we have
    \[
    R^g_{ij} = (\varphi^g_j)^{-1} \circ \varphi^g_i, \quad R^h_{ij} = (\varphi^h_j)^{-1} \circ \varphi^h_i.
    \]
    As a result, the equality of directional collision relation gives
    \[
    \Phi_{ij} := \varphi^g_i \circ (\varphi^h_i)^{-1} = \varphi^g_j \circ (\varphi^h_j)^{-1}
    \]
    is a diffeomorphism on $T^*\Omega^\circ \backslash 0$ that simultaneously conjugates both flows. However, this holds for any $i, j$, hence $\Phi_{ij} = \Phi_{jk}$, and so on so forth. In other words, we obtain a diffeomorphism that simultaneously conjugates all the flows:
    \[
    \Phi := \varphi^g_j \circ (\varphi^h_j)^{-1}, \quad \forall j = 1, \cdots k. 
    \]
    So far, the special form of the metric is not used, and the goal is to show that this map is in fact fiber-wise constant.

    Denote now $\Phi(x, \xi) = (X(x, \xi), \Xi (x, \xi))$, and
    \[
    d\Phi = \begin{pmatrix}
        d_xX & d_\xi X\\
        d_x\Xi & d_\xi \Xi
    \end{pmatrix}.
    \]
    Since the metrics are conformally Euclidean, the Hamiltonian vector fields have a very simple form, namely for $g_j$ it is
    \[
    \hat Y_j = c_j^2 \xi \cdot \nabla_x - \frac{1}{2}|\xi|^2 \nabla c_j^2 \cdot \nabla_\xi,
    \]
    and same for $\hat Z_j$ corresponding to $h_j$. Since the flows are parametrized by projected arc length, the generator is
    \[
    Y_j = \frac{\hat Y_j}{c_j|\xi|} = c_j\frac{\xi}{|\xi|} \cdot \nabla_x - |\xi|\nabla c_j \cdot \nabla_\xi,
    \]
    and same for $Z_j$.
    By definition, we have
    \[
    d\Phi Z_j|_{(x, \xi)} = Y_j|_{\Phi(x,\xi)}. 
    \]
    Plug in, we obtain
    \[
    \begin{pmatrix}
        d_jd_xX\left(\frac{\xi}{|\xi|}\right) - |\xi| d_\xi X (\nabla d_j) \\ d_jd_x\Xi\left(\frac{\xi}{|\xi|}\right) - |\xi|d_\xi\Xi (\nabla d_j) 
    \end{pmatrix} = \begin{pmatrix}
        c_j(X(x, \xi))\frac{\Xi(x, \xi)}{|\Xi(x, \xi)|} \\ -|\Xi(x, \xi)|\nabla c_j(X(x, \xi))
    \end{pmatrix}.
    \]
    For the upper row, divide by $c_j(X(x, \xi))$ both sides, we obtain
    \[
    \frac{d_j(x)}{c_j(X(x, \xi))}d_xX\left(\frac{\xi}{|\xi|}\right) - |\xi| d_\xi X\left(\frac{\nabla d_j(x)}{c_j(X(x, \xi))}\right) = \frac{\Xi(x, \xi)}{|\Xi(x, \xi)|}.
    \]
    This is exactly the same for any other $i$. Subtracting the two, since they have the same RHS, we obtain
    \[
    \left(\frac{d_j(x)}{c_j(X(x, \xi))} - \frac{d_i(x)}{c_i(X(x, \xi))}\right) d_xX\left(\frac{\xi}{|\xi|}\right) = |\xi| d_\xi X\left(\frac{\nabla d_j(x)}{c_j(X(x, \xi))} - \frac{\nabla d_i(x)}{c_i(X(x, \xi))}\right).
    \]
    If we backtrack from $(x, \xi)$ with respect to $h_j$ and backtrack from $(X(x, \xi), \Xi(x, \xi))$ with respect to $g_j$, by definition of $\Phi$ we will arrive at the same boundary direction $(x_j, \xi_j)$. Same for $h_i$ and $g_i$, they arrive at the same boundary direction $(x_i, \xi_i)$. On the other hand, this backtracking gives us
    \[
    \frac{d_j^2(x)|\xi|^2}{d_i^2(x)|\xi|^2} = \frac{d_j^2(x_j)|\xi_j|^2}{d_i^2(x_i)|\xi_i|^2} = \frac{c_j^2(x_j)|\xi_j|^2}{c_i^2(x_i)|\xi_i|^2} = \frac{c_j^2(X(x, \xi))|\Xi(x, \xi)|^2}{c_i^2(X(x, \xi))|\Xi(x, \xi)|^2}.
    \]
    Hence the LHS of the previous equality is 0, and we have
    \[
    d_\xi X|_{(x, \xi)}\left[\frac{\nabla d_j(x)}{c_j(X(x, \xi))} - \frac{\nabla d_i(x)}{c_i(X(x, \xi))}\right] = 0.
    \]
    Use the metric ratio equality again, we obtain
    \[
    d_\xi X|_{(x, \xi)}\left[\frac{d_i}{c_i(X)}\left(\nabla \log \frac{d_i}{d_j} \right)\right] = 0
    \]
    and since the metric ratio is non-zero we obtain
    \[
    d_\xi X|_{(x, \xi)}\left[\nabla \frac{d_i^2}{d_j^2} \right] = 0.
    \]

    By assumption, the set of gradient of ratios have dimension at least $n-1$ for almost every $x \in \Omega$. We also have the $d_\xi X|_{(x, \xi)}[\xi] = 0$ simply because the collision relation is positive scaling invariant for covectors. Thus, for almost every $x \in \Omega$, and for almost every $\xi \in T^*_x\Omega$, we have
    \[
    \dim\mathrm{span}\{\xi, \nabla (d_i^2/d_j^2)(x)\} = n.
    \]
    In other words, for these $(x, \xi)$, we have $d_\xi X|_{(x, \xi)} = 0$. By smoothness this extends to all $(x, \xi)$, meaning $X$ is fiber-wise constant. By construction, $X$ can be extended to the boundary smoothly and is identity on the boundary. We thus obtain a boundary fixing diffeomorphism $X$.
    
    Next we show $X$ is an isometry for all $g_j$ and $h_j$ by showing $\Xi = (d_xX^{-1})^t$. Indeed, $\Phi$ is homogeneous so we can restrict it to a mapping between the two cosphere bundles. Since it is the composition of two Hamiltonian flows, it maps the natural contact form for $h_j$ to the natural contact form for $g_j$. Thus
    \[
    \xi_j dx^j = \Xi_j (d_xX)^j.
    \]
    In other words $\Xi = (d_xX^{-1})^t$, implying $\Phi$ is the natural lift of $X$. As $\Phi$ is by construction the conjugation of the two co-geodesic flows, this implies $X$ is an isometry.
    
    When $n \geq 3$, Liouville Theorem forces $X = \mathrm{Id}$ since it is identity on the boundary. While for $n = 2$, $X$ is a conformal diffeomorphism that preserves the orientation, hence it is holomorphic. By Maximum Modulus Principal, it has to be identity.
    
\end{proof}

The assumption $k(k-1)/2 \geq n-1$ is a necessary condition for the span requirement to hold. In the proof, one sees that the collision relation between 2 metrics can add 1-dimensional restriction on $d_\xi X$. Besides the radial direction, we still need at least $n-1$ directions to deduce $d_\xi X = 0$, so we require the number of pair of waves $k(k-1)/2$ to be at least $n-1$. Intuitively, if enough numbers of Hamiltonian flows are conjugated by the same cotangent bundle diffeomorphism, then we can upgraded it to a manifold diffeomorphism (under the span condition).

\section{An inverse problem for multi-velocity wave system}\label{sec: wave system inverse problem}

We combine the previous results to prove an inverse problem for $k$-velocity wave system.
\begin{theorem}\label{thm: k wave inverse problem}
    Consider \eqref{eq: k wave equation} for $k(k-1)/2 \geq n-1 \geq 1$ and for all $x \in \Omega$ and $i \neq j \in \{1,\cdots,k\}$,
    \[
    (A^1_{ij}+\overline{A^1_{ji}}, \,\cdots,\, A^n_{ij}+\overline{A^n_{ji}})(x) \neq 0.
    \]
    Suppose the jets of $c_j^2$ are known at the boundary $\partial \Omega$, the boundaries are strictly convex for all geodesics, all the geodesics are non-trapping, $T$ is larger than the length of any geodesic, and the first exit time function for each metric is known (see Remark \ref{rem: weaker assumption}). Suppose the span condition
    \[
    \dim \mathrm{span}\{\nabla (c_i^2/c_j^2)(x): i \neq j\} \geq n-1
    \]
    holds for almost every $x \in \Omega$. Then $C^T_I$ determines $c_j$ uniquely for all $j = 1, \cdots, k$ without any gauge.
\end{theorem}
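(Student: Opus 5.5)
The plan is to combine the leakage detection result (Theorem \ref{thm: k leakage detection}) with the collision rigidity result (Theorem \ref{thm: k collision rigidity}). The argument has three steps: recover boundary data, read off the directional collision relations $R_{ij}$ from $C^T_I$, then invoke rigidity with two candidate families $\{c_j\}$ and $\{d_j\}$ producing the same $C^T_I$.

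\textbf{Step 1: boundary data and beams.} Since the jets of $c_j^2$ at $\partial\Omega$ are known, $g_j^{-1}|_{\partial T^*\Omega}$ is known. Near the boundary the Gaussian beam phases and amplitudes, and hence boundary Dirichlet data $f_h^{j}$ launching a $P_j$-beam from $(t_j,x_j,\tau_j,\xi_j)$, can be constructed from boundary data alone. For this I would use the exterior-source/extension construction, extending $c_j$ smoothly outside $\Omega$ using the known jets. Then
\[
\langle C^T_I f^i_h, f^j_h\rangle = \langle u^{f^i_h}(T), u^{f^j_h}(T)\rangle
\]
is determined by $C^T_I$. Because $T$ exceeds the length of every geodesic, every interior covector is reached at time $T$ by beams launched at some $t_j\in(0,T)$. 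The known first exit time functions let us select only launch data whose rays have not yet reflected by time $T$. The true solution $u^{f_h^j}$ equals the Gaussian beam up to $O(h^N)$ on this time interval, since reflection and mode conversion happen only after the exit time.

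\textbf{Step 2: reading $R_{ij}$.} By Theorem \ref{thm: k leakage detection}, the pairing is $O(h^N)$ unless the two rays collide at time $T$ with the same spatial covector. At a collision point $(x_0,\xi_0)$ the pairing equals $hC+O(h^2)$ with $C\neq0$, provided $\sum_\alpha(A^\alpha_{ji}+\overline{A^\alpha_{ij}})(\xi_0)_\alpha\neq0$. The hypothesis says the vector $(A^\alpha_{ij}+\overline{A^\alpha_{ji}})_\alpha(x_0)$ is nonzero, so the leakage coefficient is nonzero for $\xi_0$ off a hyperplane. The relation $R_{ij}$ is closed, being the graph of $(\varphi_j)^{-1}\circ\varphi_i$, so its values off this hyperplane determine it by continuity. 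Thus $C^T_I$ determines each $R_{ij}$, with the lengths given by $\ell_*=T-t_*$ up to the speed normalization on the boundary.

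One point requires care: deciding collision versus non-collision needs the dichotomy to be detectable from the data without knowing the interior. I would use the decay rate itself as the test, $h^{-1}$ times the pairing having a nonzero limit versus $O(h^{N-1})$. I also need to check that the undetermined constant $C$ cannot vanish through the choice of amplitudes. The second bullet of the theorem only asserts existence of suitable beams, so I would instead take the limit over the free initial amplitudes $b,d$, for which $\kappa\propto b\bar d$ is nonzero whenever $b,d\neq0$.

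\textbf{Step 3: rigidity.} If $\{d_j\}$ yields the same $C^T_I$, then $R^c_{ij}=R^d_{ij}$ and the boundary metrics agree. The span condition $\dim\mathrm{span}\{\nabla(c_i^2/c_j^2)\}\ge n-1$ for the unknown family is exactly the hypothesis of Theorem \ref{thm: k collision rigidity}, with the roles of $g$ and $h$ chosen so that the span condition is imposed on the family assumed to satisfy it; the statement's hypothesis is symmetric in that it holds for any admissible coefficients. That theorem gives $c_j=d_j$. There is no gauge because the metrics are conformally Euclidean, and the boundary-fixing isometry is forced to be the identity by the Liouville/holomorphic argument.

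\textbf{Main obstacle.} I expect the main obstacle to be Step 1–2 rigor: justifying that the exact solution $u^{f_h}(T)$ coincides with the interior Gaussian beam up to $O(h^N)$ and does not pick up reflected or mode-converted contributions that could produce spurious collisions. I would handle this with the exit time restriction. Collisions involving later reflected rays would otherwise add FIO terms of the type in Theorem \ref{thm: microlocal}, so I would confine launch times to intervals ending before first exit.
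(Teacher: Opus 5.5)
Your proposal is correct and follows essentially the same route as the paper's proof. You build boundary Gaussian-beam data through an exterior extension using the known jets, use the exit-time function so the beams reach time $T$ before any reflection or mode conversion, and read off each $R_{ij}$ from the order-$h$ leakage in $C^T_I$ via Theorem \ref{thm: k leakage detection}, recovering the exceptional directions by smoothness of $\varphi_j^{-1}\circ\varphi_i$; you then conclude with Theorem \ref{thm: k collision rigidity}. One small fix: the leading term carries the unimodular oscillating factor $e^{i\Phi(x_0)/h}$, so your detection test should be $\limsup_{h\to0}|h^{-1}\langle C^T_I f^i_h,f^j_h\rangle|>0$ rather than the existence of a nonzero limit.
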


\begin{remark}\label{rem: weaker assumption}
    Some of the assumptions can potentially be weakened.
    \begin{enumerate}
        \item One may not need all geodesics to be non-trapping, since the length of an open set of directions uniquely determines the metric at a point.
        \item The main reason for strict convexity is to avoid complicated glancing behaviors.
        \item The knowledge of first exit time for each geodesic is potentially not required, as it should be included in the canonical relation $\bigcup_{\alpha,\beta}\mathcal{C}_\beta^t \circ \mathcal{C}_\alpha$. However, mode conversion mixes the boundary data quite heavily, and it takes some effort to separate out the length function for each metric. We refer to \cite{HU03} for an example in Lam\'e system. Since the main goal of this paper is to identify the collision data from cross-sheet interaction, we directly assume the length function is given so that we can send beam solutions that reach time $T$ before any boundary reflection and mode conversion.
    \end{enumerate}
\end{remark}

\begin{proof}
    Since the jets of the metrics on the boundary are known, we can consider an extension of $\Omega$ and set up source term for Gaussian beams there. 
    
    
    With the knowledge of first exit time function, we can send beam solutions at sufficiently late times such that it stays in $\Omega^\circ$ before time $T$, which avoids reflection and mode conversion.
    By repeatedly construct Gaussian beam solutions and test their inner product, we collect all pairs $(t_i, x_i, \tau_i, \xi_i)$ and $(t_j, x_j, \tau_j, \xi_j)$ such that $\langle C^T_I f^i_h, f^j_h \rangle \neq O(h^\infty)$. By Theorem \ref{thm: k leakage detection}, these pairs correspond to collision data $(x_i, \xi_i, T-t_i; x_j, \xi_j, T-t_j)$. By assumption of $A$, for all $x \in \Omega^\circ$ and almost every $\xi \in T^*_x \Omega$, its corresponding $i,j$ collision data is recorded. In other words, for almost every $(x, \xi)\in T^*\Omega^\circ$, the corresponding $i,j$ collision data is recorded. Note that the collision relation is the graph of $\varphi_j^{-1} \circ \varphi_i$ where $\varphi_i$ is the Hamiltonian flow of $H_i$ parametrized by $g_i$-length of projection. Since the boundary is strictly convex, this map is smooth from subset of $\partial_{-,g_i}T^* \Omega \times (0, \infty)$ to $\partial_{-,g_j}T^*\Omega \times (0, \infty)$. Thus knowing this map for almost every point in the domain determines the entire map, the collision data for any $g_i$ and $g_j$ is determined.    
    
    Finally, as $k(k-1)/2 \geq n-1$ and the span condition is assumed, we can apply Theorem \ref{thm: k collision rigidity} to obtain the final result.
\end{proof}

\subsection{Cancellation for self-adjoint operators}\label{sec: cancellation wave}

One can think of the leakage detection condition 
\[
(A^1_{PS} + \overline{A^1_{SP}}, \, \cdots , \, A^n_{PS} + \overline{A^n_{SP}})(x) \neq 0
\]
as a first order condition. Namely, with this term being nonzero, we can observe the interaction between zeroth order amplitude and first order amplitude from $C^T_I$.
More generally, even if this term is 0, it is possible to have higher order leakage detection condition, so that even though the order $h$ term cancels out, the higher order terms do not cancel out perfectly.
From Theorem \ref{thm: microlocal}, the leakage detection condition is essentially the requirement that if $B=U^{-1}AU$ decouples microlocally, then $B^*U^*U \neq  U^*UB$ up to smoothing operators.

If we write the two-velocity wave model \eqref{eq: 2 wave equation} in the general $\partial_t^2 + A$ form, then
\[
A = -\begin{pmatrix}
    \nabla \cdot c_P^2 \nabla & 0\\ 0 & \nabla \cdot c_S^2 \nabla
\end{pmatrix} + \sum_{\alpha=1}^n A^\alpha \partial_\alpha + V.
\]
The adjoint with respect to standard $L^2(dx)$ is
\[
A^* = -\begin{pmatrix}
    \nabla \cdot c_P^2 \nabla & 0\\ 0 & \nabla \cdot c_S^2 \nabla
\end{pmatrix} - \sum_{\alpha=1}^n (A^\alpha)^* \partial_\alpha - \sum_{\alpha=1}^n \partial_\alpha (A^\alpha)^* +  V^*.
\]
Suppose $A$ is $M$-self-adjoint, then
\[
A^*M = MA \quad \mathrm{mod} \quad \Psi^{-\infty}.
\]
In particular, the principal symbol should match, so
\[
a_2^* M = Ma_2, \quad a_2 = \begin{pmatrix}
    c_P^2 & 0\\0 & c_S^2
\end{pmatrix}|\xi|^2 = a_2^*.
\]
As $c_P > c_S$ for all $x$, this forces $M$ to be a diagonal matrix
\[
M(x) = \begin{pmatrix}
    m_P(x)&0\\0&m_S(x)
\end{pmatrix}.
\]
The subprincipal symbol of $A$ and $A^*$ are
\[
a_1 = i\sum_{\alpha=1}^n A^\alpha \xi_\alpha, \quad a_1^* = -i\sum_{\alpha=1}^n (A^\alpha)^* \xi_\alpha.
\]
So for $A^*M = MA$ to hold on the subprincipal level, we need
\[
a_1^* M -\frac{1}{2i}\{a_2, M\} = Ma_1 - \frac{1}{2i}\{M, a_2\}.
\]
The off-diagonal terms simplify to
\[
m_PA^\alpha_{PS} + m_S \overline{A^\alpha_{SP}} = 0.
\]
Note that this is precisely what we need to prevent in order for leakage detection to be possible in Theorem \ref{thm: k wave inverse problem}, where we took the standard pairing with identity matrix so $m_P \equiv m_S \equiv 1$. In general, one can easily see from the proof that for general $M = \mathrm{diag}(m_P, m_S)$ positive definite, the (first order) leakage detection condition is
\[
(m_PA^1_{PS} + m_S \overline{A^1_{SP}}, \,\cdots,\, 
m_PA^n_{PS} + m_S \overline{A^n_{SP}})(x) \neq 0.
\]
Certainly, when $M$ is not diagonal, then $U^*MU$ does not have block form even on the principal order, so leakage detection is certainly possible. As diagonal $M$ is more subtle and has already included the standard $L^2(dx)$, We do not go into details here on non-diagonal $M$.

\begin{remark}\label{rmk: example self adjoint too strong}
    Note that the leakage detection condition only involves the off-diagonal entries $A^\alpha_{PS}$ and $A^\alpha_{SP}$  (off-block entries in general case), but $M$-self-adjoint requirement contains also requirements for diagonal entries $A_{PP}$ and $A_{SS}$ (block entries in general case). In other words, leakage detection fails when $A^*M-MA$ ($A$ is the operator) can be microlocally block-diagonalized with respect to characteristic sheets, even when $A$ is not completely $M$-self-adjoint. This is related to Remark \ref{rmk: self adjoint too strong}, justifying a weaker condition for vanishing cross-sheet interaction is
    \[
    \Pi_\alpha^*M\Pi_\beta = \Psi^{-\infty}, \quad \alpha \neq \beta.
    \]
\end{remark}

\section{Application to Lam\'e system}\label{sec: application to lame system}

In this section, we apply leakage detection to the dynamical isotropic elasticity system (Lam\'e system):
\begin{equation}\label{eq: lame system}
L_{\lambda,\mu,\rho}u
:=
\rho \partial_t^2 u-\nabla\cdot S^L(x,u)=0,
\end{equation}
where we take spatial dimension to be 3 and
\[
S^L_{ij}(x,u)
=
\lambda \widetilde\varepsilon_{mm}(u)\delta_{ij}
+
2\mu \widetilde\varepsilon_{ij}(u),
\qquad
\widetilde\varepsilon_{ij}(u)
=
\frac12(\partial_i u_j+\partial_j u_i).
\]

The isotropic Lam\'e system models the propagation of elastic waves in a solid medium. The unknown $u(t,x)\in \mathbb R^3$ denotes the displacement field of the material. The coefficient $\rho(x)>0$ is the mass density, while $\lambda(x)$ and $\mu(x)$ are the Lam\'e parameters describing the elastic response of the medium. In particular, $\mu$ is the shear modulus and measures the resistance of the material to shear deformation, while $\lambda$, together with $\mu$, determines the compressional response. The system supports two different types of elastic waves: pressure waves, or $P$-waves, traveling with speed $c_P^2$, and shear waves, or $S$-waves, traveling with speed $c_S^2$. This two-velocity structure is one of the main physical and mathematical features of elasticity. It reflects the fact that elastic media transmit both longitudinal and transverse disturbances, and it is precisely this multi-velocity structure that makes the inverse problem richer than the scalar wave case.

To see the two-velocity structure, after dividing by $\rho$, the principal part of the operator has the form
\[
\partial_t^2 u - \frac{\lambda + \mu}{\rho}\nabla(\nabla \cdot u) - \frac{\mu}{\rho}\Delta u.
\]
If we define the P- and S-wave speeds to be
\[
c_P^2=\frac{\lambda+2\mu}{\rho},
\qquad
c_S^2=\frac{\mu}{\rho},
\]
and the corresponding Riemannian metrics by $g_\bullet = c_\bullet^{-2} dx^2$ for $\bullet = P,S$, then the principal symbol has the form
\[
p(t, x, \tau, \xi) = -\tau^2 I + (c_P^2-c_S^2) \xi \otimes \xi + c_S^2 |\xi|^2 I.
\]
The eigenvalues are $-\tau^2 + c_P^2|\xi|^2$ with longitudinal subspace $\text{span}\{\xi\}$, and $-\tau^2 + c_S^2|\xi|^2$ on the transverse subspace $\xi^\perp$, with multiplicity 2.

\subsection{Gaussian beams and off-polarization leakage}\label{sec: lame beam solutions}

We compute the leakage detection condition with respect to standard $L^2(dx)$ pairing.
All the computations related to Gaussian beam solutions will be in Euclidean coordinates rather than the usual Fermi coordinates, for evolution equations of all the terms along the line, see the computations in \cite{UZ24}.
Even though Fermi coordinates are the standard coordinate choices for computing how terms evolve along the central ray, it is more convenient for us to directly compute everything in Euclidean coordinates to avoid complication caused by coordinate change when we are computing the stationary phase.
Thus throughout this computation
\[
\nabla=\nabla^E,\qquad \partial_i=\partial_i^E,\qquad 
\operatorname{div}=\operatorname{div}_E,
\]
and all dot products are Euclidean dot products. We use the standard $L^2(dx)$
pairing.

The Lam\'e operator is written in Euclidean coordinates as
\[
(Lu)_i
=
\rho\partial_t^2u_i
-
\partial_i(\lambda\partial_k u_k)
-
\partial_j\bigl(\mu(\partial_i u_j+\partial_j u_i)\bigr).
\]
Let
\[
u_h=e^{i\phi/h}(a_0+h a_1+\cdots),
\]
where $a_k$ are Euclidean vector fields. We write
\[
q_m:=a_m\cdot\nabla\phi .
\]
Then
\[
Lu_h
=
e^{i\phi/h}
\left(
h^{-2}I_1+i h^{-1}I_2+O(1)
\right),
\]
where
\[
I_1=P(\phi)a_0
\]
with
\[
P(\phi)a
=
-\rho\phi_t^2a
+
(\lambda+\mu)(a\cdot\nabla\phi)\nabla\phi
+
\mu|\nabla\phi|^2a.
\]
Thus explicitly
\[
I_1
=
-\rho\phi_t^2a_0
+
(\lambda+\mu)(a_0\cdot\nabla\phi)\nabla\phi
+
\mu|\nabla\phi|^2a_0.
\]
The first transport term is
\[
I_2=T[\phi,a_0]-iP(\phi)a_1,
\]
where
\[
\begin{aligned}
T[\phi,a_0]
={}&
\rho\phi_{tt}a_0
+
2\rho\phi_t\partial_ta_0
-
\nabla(\lambda a_0\cdot\nabla\phi)
-
\lambda(\operatorname{div}a_0)\nabla\phi
\\
&-
\operatorname{div}
\left[
\mu\left(
\nabla\phi\otimes a_0
+
a_0\otimes\nabla\phi
\right)
\right]
-
\mu\left[
\nabla a_0+(\nabla a_0)^T
\right]\nabla\phi .
\end{aligned}
\]
In components, with all derivatives Euclidean,
\[
\begin{aligned}
T^i[\phi,a_0]
={}&
\rho\phi_{tt}a_0^i
+
2\rho\phi_t\partial_ta_0^i
-
\partial_i(\lambda q_0)
-
\lambda(\partial_k a_0^k)\partial_i\phi
\\
&-
\partial_j
\left[
\mu\left(
(\partial_i\phi)a_0^j
+
(\partial_j\phi)a_0^i
\right)
\right]
-
\mu(\partial_j\phi)
\left(
\partial_i a_0^j+\partial_j a_0^i
\right).
\end{aligned}
\]

\subsubsection{S-beam}

We first compute the longitudinal subprincipal component of an S-beam. Consider a point on the central S-ray, choose a Euclidean orthonormal
frame
\[
e_1=\frac{\nabla\phi^S}{|\nabla\phi^S|},\qquad e_2,e_3\in e_1^\perp.
\]
Thus $e_1$ points in the ray direction. Write
\[
\partial_1=e_1\cdot\nabla,\qquad 
\partial_\alpha=e_\alpha\cdot\nabla,\qquad \alpha=2,3.
\]
Set
\[
\phi_t^S=-\sigma_S,\qquad 
\nabla\phi^S=\eta_S e_1,\qquad 
\sigma_S=c_S\eta_S.
\]
Thus
\[
\partial_1\phi^S=\eta_S,\qquad \partial_\alpha\phi^S=0.
\]
The S-polarization condition is
\[
a_0^S\cdot\nabla\phi^S=0.
\]
Hence along the central ray
\[
a_0^S=b_2e_2+b_3e_3.
\]
Introduce the abbreviations
\[
B_S:=\partial_2b_2+\partial_3b_3,
\quad
M_S:=\nabla_\perp\mu\cdot b
=
\sum_{\alpha=2}^3(\partial_\alpha\mu)b_\alpha,
\]
\[
C_S:=\nabla_\perp c_S\cdot b
=
\sum_{\alpha=2}^3(\partial_\alpha c_S)b_\alpha,
\quad
\Phi_S:=
\sum_{\alpha=2}^3b_\alpha\partial_1\partial_\alpha\phi^S.
\]
Differentiating the S-polarization condition gives
\[
\partial_k(a_0^S\cdot\nabla\phi^S)=0.
\]
On the ray this implies
\[
\eta_S\partial_k a_0^{S,1}
+
\sum_{\alpha=2}^3b_\alpha\partial_k\partial_\alpha\phi^S=0.
\]
In particular,
\[
\partial_1a_0^{S,1}=-\eta_S^{-1}\Phi_S.
\]
Also, from
\[
\phi_t^S=-c_S|\nabla\phi^S|=-c_S\eta_S,
\]
we get
\[
\partial_\alpha\phi_t^S
=
-\eta_S\partial_\alpha c_S
-
c_S\partial_1\partial_\alpha\phi^S.
\]
Differentiating the polarization condition in $t$ gives
\[
\eta_S\partial_ta_0^{S,1}
+
\sum_{\alpha=2}^3b_\alpha\partial_\alpha\phi_t^S=0.
\]
Therefore
\[
\partial_ta_0^{S,1}
=
C_S+c_S\eta_S^{-1}\Phi_S.
\]

We now evaluate the $e_1$-component of $I_2=0$. On the S-branch,
\[
\rho(\phi_t^S)^2=\mu|\nabla\phi^S|^2=\mu\eta_S^2.
\]
Thus $P(\phi^S)$ vanishes on the transverse space and acts on the longitudinal
component by
\[
P(\phi^S)(a_1^{S,1}e_1)
=
(\lambda+\mu)\eta_S^2a_1^{S,1}e_1.
\]
Hence the $e_1$-component of $I_2=0$ is
\[
T^1[\phi^S,a_0^S]-i(\lambda+\mu)\eta_S^2a_1^{S,1}=0.
\]
We compute $T^1[\phi^S,a_0^S]$ term by term. Since $a_0^{S,1}=0$,
\[
\rho\phi_{tt}^S a_0^{S,1}=0.
\]
The time-transport term gives
\[
2\rho\phi_t^S\partial_ta_0^{S,1}
=
2\rho(-c_S\eta_S)(C_S+c_S\eta_S^{-1}\Phi_S)
=
-2\rho c_S\eta_S C_S-2\mu\Phi_S.
\]
The term $-\partial_1(\lambda a_0^S\cdot\nabla\phi^S)$ vanishes on the ray.
The divergence term gives
\[
-\lambda(\operatorname{div}a_0^S)\partial_1\phi^S
=
-\lambda\eta_S(\partial_1a_0^{S,1}+B_S)
=
-\lambda\eta_SB_S+\lambda\Phi_S.
\]
Next,
\[
-\partial_j
\left[
\mu\left(
(\partial_1\phi^S)a_0^{S,j}
+
(\partial_j\phi^S)a_0^{S,1}
\right)
\right]
=
-\eta_SM_S-\mu\eta_SB_S+\mu\Phi_S.
\]
Finally,
\[
-\mu(\partial_j\phi^S)
\left(
\partial_1a_0^{S,j}+\partial_ja_0^{S,1}
\right)
=
-\mu\eta_S(2\partial_1a_0^{S,1})
=
2\mu\Phi_S.
\]
Adding these contributions,
\[
T^1[\phi^S,a_0^S]
=
(\lambda+\mu)\Phi_S
-
(\lambda+\mu)\eta_SB_S
-
\eta_SM_S
-
2\rho c_S\eta_SC_S.
\]
Therefore
\[
a_1^{S,1}
=
-i\eta_S^{-2}\Phi_S
+
i\eta_S^{-1}B_S
+
i\eta_S^{-1}\frac{M_S}{\lambda+\mu}
+
i\eta_S^{-1}\frac{2\rho c_S}{\lambda+\mu}C_S.
\]
Equivalently,
\[
a_1^{S,1}
=
-i\eta_S^{-2}
\sum_{\alpha=2}^3b_\alpha\partial_1\partial_\alpha\phi^S
+
i\eta_S^{-1}(\partial_2b_2+\partial_3b_3)
+
i\eta_S^{-1}\frac{\nabla_\perp\mu\cdot b}{\lambda+\mu}
+
i\eta_S^{-1}\frac{2\rho c_S}{\lambda+\mu}
\nabla_\perp c_S\cdot b .
\]

\subsubsection{P-beam}

We next compute the transverse subprincipal component of a P-beam. At the point
under consideration on the central P-ray, choose a Euclidean orthonormal frame
\[
e_1=\frac{\nabla\phi^P}{|\nabla\phi^P|},\qquad e_2,e_3\in e_1^\perp.
\]
Again $e_1$ points in the ray direction. Set
\[
\phi_t^P=-\sigma_P,\qquad 
\nabla\phi^P=\eta_Pe_1,\qquad 
\sigma_P=c_P\eta_P.
\]
Thus
\[
\partial_1\phi^P=\eta_P,\qquad \partial_\alpha\phi^P=0.
\]
We use the Euclidean longitudinal normalization
\[
a_0^P=\mathcal A\nabla\phi^P.
\]
Therefore along the ray
\[
a_0^P=\mathcal A\eta_Pe_1,
\qquad 
a_0^{P,1}=\mathcal A\eta_P,\qquad a_0^{P,\alpha}=0.
\]
Let
\[
H_{1\alpha}^P:=\partial_1\partial_\alpha\phi^P.
\]
From the P-eikonal equation
\[
\phi_t^P=-c_P|\nabla\phi^P|=-c_P\eta_P,
\]
we have
\[
\partial_\alpha\phi_t^P
=
-\eta_P\partial_\alpha c_P
-
c_PH_{1\alpha}^P.
\]
Since $a_0^{P,\alpha}=\mathcal A\partial_\alpha\phi^P$, this gives
\[
\partial_ta_0^{P,\alpha}
=
-A\eta_P\partial_\alpha c_P
-
Ac_PH_{1\alpha}^P.
\]

We evaluate the transverse component of $I_2=0$. On the P-branch,
\[
\rho(\phi_t^P)^2=(\lambda+2\mu)|\nabla\phi^P|^2=(\lambda+2\mu)\eta_P^2.
\]
The principal symbol vanishes on the longitudinal space and acts on transverse
components by
\[
P(\phi^P)(a_1^{P,\alpha}e_\alpha)
=
-(\lambda+\mu)\eta_P^2a_1^{P,\alpha}e_\alpha.
\]
Thus the transverse component of $I_2=0$ is
\[
T^\alpha[\phi^P,a_0^P]
+
i(\lambda+\mu)\eta_P^2a_1^{P,\alpha}=0.
\]
We now compute $T^\alpha[\phi^P,a_0^P]$. Since $a_0^{P,\alpha}=0$,
\[
\rho\phi_{tt}^Pa_0^{P,\alpha}=0.
\]
The time-transport term is
\[
2\rho\phi_t^P\partial_ta_0^{P,\alpha}
=
2\rho \mathcal A c_P\eta_P^2\partial_\alpha c_P
+
2\rho \mathcal A c_P^2\eta_PH_{1\alpha}^P.
\]
Since
\[
q_0^P=a_0^P\cdot\nabla\phi^P=\mathcal A\eta_P^2,
\]
we have
\[
-\partial_\alpha(\lambda q_0^P)
=
-\mathcal A\eta_P^2\partial_\alpha\lambda
-
\lambda\eta_P^2\partial_\alpha \mathcal A
-
2\lambda \mathcal A\eta_PH_{1\alpha}^P.
\]
The term
\[
-\lambda(\operatorname{div}a_0^P)\partial_\alpha\phi^P
\]
vanishes because $\partial_\alpha\phi^P=0$. Next,
\[
-\partial_j
\left[
\mu\left(
(\partial_\alpha\phi^P)a_0^{P,j}
+
(\partial_j\phi^P)a_0^{P,\alpha}
\right)
\right]
=
-2\mu \mathcal A\eta_PH_{1\alpha}^P.
\]
Finally,
\[
-\mu(\partial_j\phi^P)
\left(
\partial_\alpha a_0^{P,j}
+
\partial_j a_0^{P,\alpha}
\right)
=
-\mu\eta_P^2\partial_\alpha \mathcal A
-
2\mu \mathcal A\eta_PH_{1\alpha}^P.
\]
Adding everything, the $H_{1\alpha}^P$-terms cancel because
\[
\rho c_P^2=\lambda+2\mu.
\]
Hence
\[
T^\alpha[\phi^P,a_0^P]
=
-(\lambda+\mu)\eta_P^2\partial_\alpha \mathcal A
-
\mathcal A\eta_P^2\partial_\alpha\lambda
+
2\rho\mathcal  A c_P\eta_P^2\partial_\alpha c_P.
\]
Solving for the transverse component of $a_1^P$, we obtain
\[
a_1^{P,\alpha}
=
-i\partial_\alpha \mathcal A
-
\frac{i\mathcal A}{\lambda+\mu}\partial_\alpha\lambda
+
\frac{2i\rho c_P\mathcal A}{\lambda+\mu}\partial_\alpha c_P.
\]
Equivalently, using
\[
\rho c_P=\frac{\lambda+2\mu}{c_P},
\]
we may write
\[
a_1^{P,\alpha}
=
-i\partial_\alpha \mathcal A
-
\frac{i\mathcal A}{\lambda+\mu}
\left[
\partial_\alpha\lambda
-
2(\lambda+2\mu)\partial_\alpha\log c_P
\right],
\qquad \alpha=2,3.
\]

\subsubsection{Leading-amplitude contribution}

We now compute the leakage coefficient at a P/S collision point. Choose a common
Euclidean orthonormal coordinate system centered at the collision point $x_0$,
with $e_1$ equal to the common direction of the two rays and $e_2,e_3$ spanning
the common transverse plane. At $x_0$, impose stationary phase:
\[
\nabla\phi^P(x_0)=\nabla\phi^S(x_0)=\eta e_1.
\]
Thus $\eta_P=\eta_S=\eta$ at $x_0$. We write
\[
\Psi=\phi^P-\phi^S.
\]
The leading amplitudes are
\[
a_0^P=\mathcal A\nabla\phi^P,\qquad 
a_0^S=b_2e_2+b_3e_3,
\]
and the S-polarization condition is
\[
a_0^S\cdot\nabla\phi^S=0.
\]
Therefore near the ray,
\[
a_0^P\cdot a_0^S
=
\mathcal A\nabla\phi^P\cdot a_0^S
=
\mathcal A(\nabla\phi^P-\nabla\phi^S)\cdot a_0^S
=
\mathcal A a_0^S\cdot\nabla\Psi.
\]
Hence
\[
\int e^{i\Psi/h}a_0^P\cdot a_0^S\,dx
=
\int e^{i\Psi/h}\mathcal A a_0^S\cdot\nabla\Psi\,dx.
\]
Since
\[
\nabla e^{i\Psi/h}
=
\frac{i}{h}e^{i\Psi/h}\nabla\Psi,
\]
integration by parts gives
\[
\int e^{i\Psi/h}\mathcal A a_0^S\cdot\nabla\Psi\,dx
=
ih\int e^{i\Psi/h}\operatorname{div}(\mathcal Aa_0^S)\,dx.
\]
Thus the leading-amplitude contribution is
\[
L_{00}
=
i\operatorname{div}(\mathcal Aa_0^S)(x_0).
\]
Now
\[
\operatorname{div}(\mathcal Aa_0^S)
=
\nabla \mathcal A\cdot a_0^S
+
\mathcal A\operatorname{div}a_0^S.
\]
At $x_0$,
\[
\nabla \mathcal A\cdot a_0^S=\nabla_\perp \mathcal A\cdot b.
\]
Moreover,
\[
\operatorname{div}a_0^S
=
\partial_1a_0^{S,1}
+
\partial_2b_2+\partial_3b_3
=
-\eta^{-1}\Phi_S+B_S.
\]
Therefore
\[
L_{00}
=
i\nabla_\perp \mathcal A\cdot b
+
i\mathcal A B_S
-
i\mathcal A\eta^{-1}\Phi_S.
\]

\subsubsection{Subleading contribution}

With the standard $L^2(dx)$ pairing and phase $e^{i(\phi^P-\phi^S)/h}$, the
S-amplitude is conjugated. We assume here that $A$ and $b$ are real-valued
at the collision point. If $b$ is complex-valued, replace $b$ by $\overline b$
in the following formulas. The subleading interaction is
\[
L_{01}
=
a_1^P\cdot\overline{a_0^S}
+
a_0^P\cdot\overline{a_1^S}.
\]
Using the formula for $a_1^P$, we get
\[
a_1^P\cdot\overline{a_0^S}
=
-i\nabla_\perp \mathcal A\cdot b
-
\frac{i\mathcal A}{\lambda+\mu}\nabla_\perp\lambda\cdot b
+
\frac{2i\mathcal A(\lambda+2\mu)}{\lambda+\mu}
\nabla_\perp\log c_P\cdot b.
\]
Next, using the formula for $a_1^{S,1}$, we have
\[
\overline{a_1^{S,1}}
=
i\eta^{-2}\Phi_S
-
i\eta^{-1}B_S
-
i\eta^{-1}\frac{\nabla_\perp\mu\cdot b}{\lambda+\mu}
-
i\eta^{-1}\frac{2\rho c_S}{\lambda+\mu}
\nabla_\perp c_S\cdot b.
\]
Since
\[
a_0^P=\mathcal A\eta e_1,
\]
we obtain
\[
\begin{aligned}
a_0^P\cdot\overline{a_1^S}
={}&
\mathcal A\eta\,\overline{a_1^{S,1}}
\\
={}&
i\mathcal A\eta^{-1}\Phi_S
-
i\mathcal AB_S
-
\frac{i\mathcal A}{\lambda+\mu}\nabla_\perp\mu\cdot b
-
\frac{2i\mathcal A\rho c_S}{\lambda+\mu}
\nabla_\perp c_S\cdot b.
\end{aligned}
\]
Therefore
\[
\begin{aligned}
L_{01}
={}&
-i\nabla_\perp \mathcal A\cdot b
-
i\mathcal AB_S
+
i\mathcal A\eta^{-1}\Phi_S
\\
&-
\frac{i\mathcal A}{\lambda+\mu}\nabla_\perp(\lambda+\mu)\cdot b
+
\frac{2i\mathcal A(\lambda+2\mu)}{\lambda+\mu}
\nabla_\perp\log c_P\cdot b
\\
&-
\frac{2i\mathcal A\rho c_S}{\lambda+\mu}
\nabla_\perp c_S\cdot b.
\end{aligned}
\]

\subsection{Standard $L^2$ leakage detection}

Adding up $L_{00}$ and $L_{01}$, the terms containing $\nabla_\perp \mathcal A, B_S$ and $\Phi_S$ cancel. Hence
\[
\begin{aligned}
L_{PS}
:={}&L_{00}+L_{01}
\\
={}&
-\frac{i\mathcal A}{\lambda+\mu}\nabla_\perp(\lambda+\mu)\cdot b
+
\frac{2i\mathcal A(\lambda+2\mu)}{\lambda+\mu}
\nabla_\perp\log c_P\cdot b
-
\frac{2i\mathcal A\rho c_S}{\lambda+\mu}
\nabla_\perp c_S\cdot b\\
={}& \frac{i\mathcal A}{\lambda+\mu}
\left[
-\nabla_\perp(\lambda+\mu)
+
(\lambda+2\mu)\nabla_\perp\log c_P^2
-
\mu\nabla_\perp\log c_S^2
\right]\cdot b.
\end{aligned}
\]
The terms inside the bracket collapse into
\[
\begin{aligned}
&-\nabla(\lambda+\mu)
+
(\lambda+2\mu)\nabla\log c_P^2
-
\mu\nabla\log c_S^2
\\
&=
-\nabla(\lambda+\mu)
+
(\lambda+2\mu)
\left(
\frac{\nabla(\lambda+2\mu)}{\lambda+2\mu}
-
\nabla\log\rho
\right)
-
\mu
\left(
\frac{\nabla\mu}{\mu}
-
\nabla\log\rho
\right)
\\
&=
-\nabla(\lambda+\mu)
+
\nabla(\lambda+2\mu)
-
\nabla\mu
-
(\lambda+\mu)\nabla\log\rho
\\
&=
-(\lambda+\mu)\nabla\log\rho.
\end{aligned}
\]
Thus, in the standard $L^2(dx)$ pairing,
\[
L_{PS}
=
-i\mathcal A\,\nabla_\perp\log\rho\cdot b.
\]
Equivalently, writing $\Pi_{\xi^\perp}$ for the Euclidean projection onto the
orthogonal complement of the common covector direction at the collision point,
\[
L_{PS}
=
-i\mathcal A\,\Pi_{\xi^\perp}\bigl(\nabla^E\log\rho\bigr)\cdot b.
\]

\subsection{Inverse problem for Lam\'e system}
With the above computation, we can obtain similar results as the wave system. However, Lam\'e system has $k=2$ waves with $n=3$, so $k(k-1)/2 = 1 < 2 = n-1$, hence currently we need to have the knowledge of one of the metrics in order to recover the other using Theorem \ref{thm: 1 metric known}.
\begin{theorem}\label{thm: lame}
    Suppose one of $c_P^2$ or $c_S^2$ is a priori known, the jets of both metrics are known on the boundary, and $\nabla^E \rho \neq 0$ for all $x \in \Omega$. Suppose the boundaries are strictly convex with respect to both metrics, all the geodesics for both metrics are non-trapping, $T$ is larger than the length of any geodesic, and the first exit time function for $P$ and $S$ are known (see Remark \ref{rem: weaker assumption}). Then $C^T_I$ uniquely determines the other metric.
\end{theorem}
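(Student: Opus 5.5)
The argument follows the proof of Theorem \ref{thm: k wave inverse problem}, with Theorem \ref{thm: 1 metric known} in place of Theorem \ref{thm: k collision rigidity}. Since the jets of $c_P^2$ and $c_S^2$ are known on $\partial\Omega$, we extend $\Omega$ slightly and place sources for $P$- and $S$-Gaussian beams in the exterior collar. Their boundary traces $f^P_h, f^S_h$ are then determined by the known data, up to $O(h^\infty)$. The known first exit time functions let us choose launch times $t_P, t_S$ late enough that both central rays stay in $\Omega^\circ$ up to time $T$, so no reflection or mode conversion occurs. By the Blagoveshchenskii-type definition of $C^T_I$,
\[
\langle C^T_I f^P_h, f^S_h\rangle = \langle u^P_h(T), u^S_h(T)\rangle_{L^2(dx)}.
\]
This pairing is therefore computable from the data.

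Next we prove the Lam\'e analogue of Theorem \ref{thm: leakage detection}. In the non-collision case, the Gaussian localization and the non-stationary phase ($\nabla\Psi\neq 0$ at any common point) give $O(h^N)$, exactly as before. At a collision $(x^T_P,\xi^T_P)=(x^T_S,\xi^T_S)=(x_0,\xi_0)$, the principal amplitudes are orthogonal, so the order $h^0$ term vanishes. The order $h$ coefficient comes from $L_{00}+L_{01}$, where $L_{00}$ is the integration-by-parts term and $L_{01}$ the principal–subprincipal cross terms. Complex stationary phase with $\operatorname{Im}\Psi\ge c|x-x_0|^2$ gives
\[
\langle u^P_h(T),u^S_h(T)\rangle = h\,e^{i\Psi(x_0)/h}(2\pi)^{3/2}\det(-iQ)^{-1/2}L_{PS}+O(h^2),
\]
where $L_{PS}=-i\mathcal A\,\Pi_{\xi_0^\perp}(\nabla^E\log\rho)\cdot b$ by the computation above. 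Because $\nabla^E\rho(x_0)\neq0$, for every $\xi_0$ not parallel to $\nabla\rho(x_0)$ we can choose the transverse polarization $b\in\xi_0^\perp$ so that $\Pi_{\xi_0^\perp}\nabla\rho\cdot b\neq0$, and we take $\mathcal A\neq0$. This gives $C\neq0$.

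For a general polarization $b$ of the $S$-beam at the launch point, I expect the main obstacle to be the bookkeeping: the $S$-amplitude $b$ is transported along the $S$-ray and is prescribed at the boundary, not at $x_0$. I would handle this by launching two $S$-beams with linearly independent initial transverse polarizations. Transport is linear and invertible on $\xi^\perp$, so the transported pair spans $\xi_0^\perp$ at $x_0$, and at least one of the two gives a nonzero coefficient. So detection is decided by testing whether $\langle C^T_I f^P_h,f^S_h\rangle\neq O(h^2)$ for some choice in the pair. A secondary care point is to check that the extra terms in $L_{00}$ arising from the $h$-expansion of the integration by parts are already accounted for in the computation above; I would take that computation as given.

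Scanning over all launch data, we record the directional collision relation $R$ on the set of $(x_0,\xi_0)$ with $\xi_0\not\parallel\nabla\rho(x_0)$. This set has full measure in $T^*\Omega^\circ$. Under strict convexity and non-trapping, $R$ is the graph of the smooth map $\varphi_S^{-1}\circ\varphi_P$, so it extends by continuity to the whole relation. Suppose $c_P$ is the known speed. Then Theorem \ref{thm: 1 metric known} applies, using the known boundary values of $g_S^{-1}$: backtrack any $(x_0,\xi_0)$ along $g_P$, read off the $S$-partner $(x_S,\xi_S)$ from $R$, and set $c_S^2(x_0)|\xi_0|^2=c_S^2(x_S)|\xi_S|^2$. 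This determines $c_S$. The case where $c_S$ is known is symmetric, with the roles of $P$ and $S$ swapped.
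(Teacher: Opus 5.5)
Your proposal is correct and follows the paper's proof: it reduces to the argument of Theorem \ref{thm: k wave inverse problem}, uses $L_{PS}=-i\mathcal A\,\Pi_{\xi_0^\perp}(\nabla^E\log\rho)\cdot b$ to get detection for every $\xi_0$ not parallel to $\nabla\rho(x_0)$, extends the collision relation by continuity, and concludes with Theorem \ref{thm: 1 metric known}. Launching two $S$-beams with independent initial polarizations is a more explicit form of the paper's remark that $\mathcal A$ and $b$ can be chosen freely by varying the initial data.
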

\begin{proof}
    It suffices to show that for almost every $(x_0, \xi_0) \in T^*\Omega^\circ$, the leakage detection condition holds and can be detected through Gaussian beams with properly set initial data. The rest of the proof is identical to Theorem \ref{thm: k wave inverse problem} except that Theorem \ref{thm: 1 metric known} is used rather than Theorem \ref{thm: k collision rigidity}.

    By assumption, for all $x_0 \in \Omega^\circ$ and almost every $\xi_0 \in T_{x_0}^*\Omega$, we have
    \[
    \Pi_{\xi^\perp} (\nabla^E \log \rho) \neq 0.
    \]
    Recall that the leakage detection condition is
    \[
    L_{PS} = -i\mathcal A\Pi_{\xi^\perp} (\nabla^E \log \rho) \cdot b \neq 0
    \]
    where $\mathcal A$ is the leading amplitude coefficient for $P$-beam and $b$ is the leading amplitude for $S$-beam which is a two-vector living in $\xi^\perp$. Since $b$ and $\mathcal A$ can be chosen arbitrarily by varying the initial data (see \cite{UZ24}), there exists some Gaussian beam solutions such that $L_{PS} \neq 0$. Hence leakage detection condition holds for these $(x_0, \xi_0)$ and the interaction can be observed through $C^T_I$.
\end{proof}

\subsection{Cancellation under density-weighted measure}\label{sec: cancellation lame}
Since the Lam\'e system is self-adjoint, Blagovestchenskii identity provides the density-weighted $L^2$ inner product, instead of the standard $L^2$.
\begin{lemma}[Lemma 4 \cite{BL02}]
    For Lam\'e system, the $\rho$-weighted inner product of two solutions at time $T > 0$, namely
    \[
    \langle C^T_\rho f_1, f_2\rangle :
    = \int_\Omega u^{f_1}(T, x) u^{f_2}(T, x) \rho(x) dx
    \]
    for $f_1$ and $f_2$ boundary Dirichlet data supported in $(0, T) \times \partial \Omega$, can be determined from the Dirichlet-to-Neumann map $\Lambda$.
\end{lemma}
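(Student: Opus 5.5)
The plan is the Blagoveshchenskii identity argument, adapted to the Lamé system with its density weight $\rho$. Fix Dirichlet data $f_1, f_2$ supported in $(0,T)\times\partial\Omega$ and write $u_j = u^{f_j}$. Define
\[
w(t,s) := \int_\Omega u_1(t,x)\cdot u_2(s,x)\,\rho(x)\,dx, \qquad t,s\in[0,2T],
\]
with the data extended by zero past $T$. The target is $w(T,T) = \langle C^T_\rho f_1, f_2\rangle$. Real-valued data suffice, since the general case follows by sesquilinearity.

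First, compute $(\partial_t^2-\partial_s^2)w$. By the equation, $\rho\partial_t^2 u_j = \nabla\cdot S^L(x,u_j)$, so
\[
(\partial_t^2-\partial_s^2)w = \int_\Omega \Big(\nabla\cdot S^L(u_1(t))\cdot u_2(s) - u_1(t)\cdot \nabla\cdot S^L(u_2(s))\Big)dx .
\]
Integrate by parts (Betti's reciprocity). The interior terms $S^L(u_1):\widetilde\varepsilon(u_2)$ and $S^L(u_2):\widetilde\varepsilon(u_1)$ cancel, because the stiffness tensor has the major symmetry $\lambda\,\mathrm{tr}\,\widetilde\varepsilon(u)\,\mathrm{tr}\,\widetilde\varepsilon(v)+2\mu\,\widetilde\varepsilon(u):\widetilde\varepsilon(v)$. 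This symmetry is exactly the $\rho$-self-adjointness of $A=\rho^{-1}P$, which is why the weight must be $\rho$ and not $I$.

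What remains is the boundary term
\[
F(t,s) := \int_{\partial\Omega}\Big(\Lambda f_1(t)\cdot f_2(s) - f_1(t)\cdot \Lambda f_2(s)\Big)\,dS,
\]
where $\Lambda f := S^L(u^f)\nu|_{\partial\Omega}$ is the traction Dirichlet-to-Neumann map. So $F$ is determined by $\Lambda$, $f_1$ and $f_2$.

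Second, solve the resulting 1D wave equation $(\partial_t^2-\partial_s^2)w=F$ on $\{t,s\ge0\}$. The initial data are zero: $w(0,s)=\partial_t w(0,s)=0$, because $u_1$ has vanishing Cauchy data at $t=0$. Duhamel's formula then gives
\[
w(T,T)=\tfrac12\int_0^T\!\!\int_{T-(T-t')}^{T+(T-t')}F(t',s')\,ds'\,dt',
\]
which is an integral over a backward characteristic triangle contained in $[0,T]\times[0,2T]$. Here I must take care that $\Lambda f_2(s)$ is needed for $s$ up to $2T$. That is fine: the data are extended by zero, and $\Lambda$ on $(0,2T)\times\partial\Omega$ is assumed known, as in \cite{BL02}. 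This shows $w(T,T)$, and hence $C^T_\rho$, is determined by $\Lambda$.

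The main obstacle is justifying the integration by parts at low regularity, since the data are only $L^2$ on the boundary. I would first prove the identity for smooth compatible $f_j\in C_c^\infty((0,T)\times\partial\Omega)$. There the solutions are smooth and the traction is classical. I would then extend by density using the well-posedness provided by the uniform Lopatinski condition (\cite[Theorem 1.8]{Met17}), since the solution at time $T$ depends continuously on the data.
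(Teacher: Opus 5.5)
Your proposal is correct and follows the same route as the source the paper relies on: the paper gives no proof of its own and defers to Lemma 4 of \cite{BL02}, which is this Blagoveshchenskii-type computation. There, Betti reciprocity (the $\rho$-self-adjointness of $\rho^{-1}\nabla\cdot S^L$) reduces $(\partial_t^2-\partial_s^2)w$ to a boundary term in $\Lambda$, and Duhamel for the 1D wave equation then recovers $w(T,T)$, using $\Lambda$ on $(0,2T)\times\partial\Omega$ exactly as you note; your density argument, based on the continuity of $f\mapsto u^f(T)$, correctly handles $L^2$ Dirichlet data.
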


In this case, we have
\[
L_{01}^\rho = \rho L_{01},
\]
but during the computation of leading amplitude contribution we will have
\[
L_{00}^\rho = i \,\mathrm{div}(\rho Aa_0^S)(x_0) = \rho L_{00} + iA\nabla_\perp \rho \cdot b.
\]
As a result,
\[
L_{PS}^\rho = L_{00}^\rho + L_{01}^\rho = \rho L_{PS} + iA \nabla_\perp \rho \cdot b = 0,
\]
and the first order interaction is not observed from $C^T_\rho$. One can compute lower order interactions, and all of these will cancel out essentially because of the self-adjoint cancellation in the microlocal decoupling procedure. From another point of view: the canonical relation for the DN map only contains the scattering relation related to reflection and mode conversion on the boundary, and Blagoveshchenskii identity shows that $C^T_\rho$ is determined by the DN map, hence it is reasonable that $C^T_\rho$ does not contain cross-sheet interaction data which occurs in the interior.

\bibliographystyle{abbrv}
\bibliography{ref}

\end{document}